\documentclass[11pt,reqno]{amsart}
\usepackage{amsmath,amsthm}
\usepackage{amssymb,latexsym}
\usepackage{graphicx,subfig}
\usepackage{epsfig}
\usepackage{color}

\usepackage{lineno}
\newcommand*\patchAmsMathEnvironmentForLineno[1]{%
  \expandafter\let\csname old#1\expandafter\endcsname\csname #1\endcsname
  \expandafter\let\csname oldend#1\expandafter\endcsname\csname end#1\endcsname
  \renewenvironment{#1}%
     {\linenomath\csname old#1\endcsname}%
     {\csname oldend#1\endcsname\endlinenomath}}%
\newcommand*\patchBothAmsMathEnvironmentsForLineno[1]{%
  \patchAmsMathEnvironmentForLineno{#1}%
  \patchAmsMathEnvironmentForLineno{#1*}}%
\AtBeginDocument{%
\patchBothAmsMathEnvironmentsForLineno{equation}%
\patchBothAmsMathEnvironmentsForLineno{align}%
\patchBothAmsMathEnvironmentsForLineno{flalign}%
\patchBothAmsMathEnvironmentsForLineno{alignat}%
\patchBothAmsMathEnvironmentsForLineno{gather}%
\patchBothAmsMathEnvironmentsForLineno{multline}%
}

\newtheorem{thm}{Theorem}[section]

\newtheorem{lma}[thm]{Lemma}
\newtheorem{cor}[thm]{Corollary}
\newtheorem{prp}[thm]{Proposition}

\newtheorem{clm}[thm]{Claim}
\newtheorem{rmk}[thm]{Remark}

\newtheorem{conj}[thm]{Conjecture}

\newtheorem{qns}[thm]{Question}

\DeclareMathOperator{\ex}{ex}

\title{Minimum codegree threshold for $(K_4^3-{e})$-factors}

\author{Allan Lo}
\address{School of Mathematics\\ University of Birmingham\\Birmingham\\ B15 2TT\\ UK}
\email{s.a.lo@bham.ac.uk}
\thanks{The first author was supported by the ERC, grant no.~258345.}

\author{Klas Markstr{\"o}m}
\address{Department of Mathematics and Mathematical Statistics\\ Ume\r{a} University\\ S-901 87 Ume\r{a}\\ Sweden}
\email{klas.markstrom@math.umu.se}

\date{\today}
\keywords{Hypergraph, 3-graph, factorization, minimum codegree}

\begin{document}

\begin{abstract}
Given hypergraphs $H$ and $F$, an $F$-factor in~$H$ is a spanning subgraph consisting of vertex-disjoint copies of~$F$.
Let $K_4^3-e$ denote the 3-uniform hypergraph on 4 vertices with 3 edges.
We show that for any $\gamma>0$ there exists an integer $n_0$ such that every 3-uniform hypergraph $H$ of order $n > n_0$ with minimum codegree at least $(1/2+\gamma)n$ and $4|n$ contains a $(K_4^3-e)$-factor.
Moreover, this bound is asymptotically the best possible and we further give a conjecture on the exact value of the threshold for the existence of a $(K_4^3-e)$-factor.
Thereby, all minimum codegree thresholds for the existence of $F$-factors are known asymptotically for 3-uniform hypergraphs $F$ on 4 vertices.
\end{abstract}

\maketitle

\section{Introduction} \label{sec:introduction}

Given hypergraphs $H$ and $F$, an \emph{$F$-factor} (or a \emph{perfect $F$-tiling} or a \emph{perfect $F$-matching}) in~$H$ is a spanning subgraph consisting of vertex-disjoint copies of~$F$.
Clearly, if $H$ contains an $F$-factor then $|V(F)|$ divides~$|V(H)|$.
A \emph{$k$-uniform hypergraph}, or \emph{$k$-graph} for short, is a pair $H = (V(H),E(H))$, where $V(H)$ is a finite set of vertices and $E(H)$ is a set of $k$-element subsets of $V(H)$.
If $H$ is known from the context, then we will often write $V$ instead of $V(H)$.
We often write $l$-sets for $l$-element subsets.
For a $k$-graph $H$ and an $l$-set $T \in \binom{V}l$, let $\deg(T)$ be the number of $(k-l)$-sets $S \in \binom{V}{k-l}$ such that $S \cup T$ is an edge in~$H$, and let $\delta_l(H)$ be the \emph{minimum $l$-degree} of $H$, that is, $\delta_l(H) = \min \{ \deg(T) : T \in \binom{V}l \}$.
Define $t_l^k(n,F)$ to be the smallest integer $d$ such that every $k$-graph $H$ of order $n$ with $\delta_l(H) \ge d$ contains an $F$-factor.
If $n$ is not divisible by $|V(F)|$, then $t_l^k(n,F) = \binom{n-l}{k-l}$.
Hence, we always assume that $|V(F)|$ divides $n$.

For graphs (that is, 2-graphs), a classical theorem of Hajnal and Szemer\'{e}di~\cite{MR0297607} states that $t_1^2(n,K_t) = (t-1)n/t$. 
Furthermore, $t_1^2(n,F)$ is known up to an additive constant for every 2-graph~$F$, see~\cite{MR2506388}.
For graphs $F$, there is a large body of research on $t^2_1(n, F)$.
For two surveys see~\cite{MR2588541,yuster2007combinatorial}. 

In the case of hypergraphs ($k\ge3$), only a few values of $t^k_l(n,F)$ are known.
Note that when $F$ is a single edge $K_k^k$, a $K_k^k$-factor is equivalent to a perfect matching.
R\"{o}dl, Ruci\'{n}ski and Szemer\'{e}di~\cite{MR2500161} proved that
\begin{align}
t^k_{k-1}(n,K_k^k) & =  \frac{n}2 -k +\varepsilon_n, \textrm{ where $\varepsilon_n \in \{ 3/2, 2,5/2,3\}$.} \nonumber
\end{align}
For $k > l \ge 1$, K\"{u}hn and Osthus~\cite{MR2588541} and independently H\'{a}n, Person and Schacht~\cite{MR2496914} conjectured that 
\begin{align*}
t_l^k(n,K_k^k) = \left( \max \left\{ \frac12, 1 - \left(1-\frac1k\right)^{k-l} \right\} +o(1) \right) \binom{n}k.
\end{align*}
This conjecture has been verified for various cases of $k$ and $l$.
We recommend \cite{rodldirac} for a survey on~$t_l^k(n, K_k^k)$.

Here we focus on the case when $k=3$, $l=2$ and $|V(F)|=4$.
Let $K_4^3$ be the complete 3-graph on 4 vertices. 
In~\cite{lo2011f}, the authors showed that $t^3_2(n,K_4^3) = (3/4+o(1))n$, and independently Keevash and Mycroft~\cite{keevash2011geometric} determined the exact value of $t^3_2(n,K_4^3)$ for sufficiently large $n$.
For $1 \le i \le 3$, let $K_4^3- i e$ be the unique 3-graph on 4 vertices with $(4-i)$ edges.
K\"{u}hn and Osthus~\cite{MR2274077} showed that $t^3_2(n,K_4^3-2e) = (1/4+o(1))n$, and the exact value was determined by Czygrinow, DeBiasio and Nagle~\cite{czygrinow2011tiling} for large~$n$.
Note that $K_4^3-3e$ is simply an edge plus one isolated vertex and so a $(K_4^3-3e)$-factor corresponds to a matching of size at least $n/4$.
By Fact~2.1 in~\cite{MR2500161}, it is easy to deduce that $t^3_2(n,K_4^3-3e) = n/4$.
In this paper, we investigate $t^3_2(n,K_4^3-e)$, the only remaining case for 3-graphs on 4 vertices.
It is easy to show that $t^3_2(4 ,K_4^3 - e) = 1$.
Also, we know that $t^3_2(8 ,K_4^3 - e) = 4$ by a computer search.
For $n \ge 12 $, we give the following lower bound on $t^3_2(n ,K_4^3 - e)$.
\begin{prp} \label{prp:lowerimproved}
For integers $n $ with $4|n$
\begin{align*}
t^3_2(n ,K_4^3 - e) \ge n/2 - 1.
\end{align*}
\end{prp}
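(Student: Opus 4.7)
The plan is to construct, for every $n$ with $4\mid n$, a $3$-graph $H$ on $n$ vertices with $\delta_2(H)\ge n/2-2$ that contains no $(K_4^3-e)$-factor; since $t^3_2(n,F)$ is by definition the smallest codegree forcing an $F$-factor, this yields $t^3_2(n,K_4^3-e)\ge n/2-1$. My construction is parity-based: I partition $V$ into two parts $A\sqcup B$ and declare a triple $e$ to be an edge of $H$ precisely when $|e\cap A|$ is even, so $|e\cap A|\in\{0,2\}$.

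A direct count gives, for a pair $T\in\binom{V}{2}$,
\[
\deg(T)=\begin{cases} |B| & \text{if } T\subset A,\\ |B|-2 & \text{if } T\subset B,\\ |A|-1 & \text{if } T \text{ meets both parts,}\end{cases}
\]
so $\delta_2(H)=\min(|B|-2,|A|-1)$. Taking $(|A|,|B|)$ to be either $(n/2,n/2)$ or $(n/2-1,n/2+1)$ gives $\delta_2(H)=n/2-2$; I pick the one for which $|A|\not\equiv 0\pmod 3$, which is always possible because two consecutive integers have different residues modulo~$3$.

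To rule out a $(K_4^3-e)$-factor, I consider any copy of $K_4^3-e$ on vertices $\{p,q,r,s\}$, with $p$ the unique apex of degree~$3$, so that its three edges are $\{p,q,r\},\{p,q,s\},\{p,r,s\}$. Each such edge has $|e\cap A|$ even by construction, so the sum $3[p\in A]+2([q\in A]+[r\in A]+[s\in A])$ is even; this forces $p\in B$. With $p\in B$, the evenness of $|e\cap A|$ on each of the three edges gives $[q\in A]=[r\in A]=[s\in A]$, so $q,r,s$ all lie in a single part. Every copy of $K_4^3-e$ therefore contributes either $0$ or $3$ vertices to $A$, so the existence of a factor would require $3\mid |A|$, contradicting our choice of $|A|$.

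I do not anticipate any genuine obstacle: the construction is a standard divisibility barrier, and the only care needed is the case split modulo~$3$ when choosing $|A|$ so that both the codegree lower bound $n/2-2$ and the parity obstruction to a factor hold simultaneously.
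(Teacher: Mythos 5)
Your proposal is correct and is essentially the paper's own proof: the same parity-defined bipartite construction (edges are exactly the triples meeting $A$ in an even number of vertices, i.e.\ of type $AAB$ or $BBB$), the same codegree computation giving $\delta_2=\min\{|B|-2,|A|-1\}=n/2-2$, and the same observation that every copy of $K_4^3-e$ meets $A$ in $0$ or $3$ vertices, so choosing $|A|\in\{n/2,n/2-1\}$ with $3\nmid|A|$ blocks a factor.
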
 

We show that the inequality above is indeed asymptotically sharp.

\begin{thm} \label{thm:t(n,K_4m)}
Given a constant $\gamma >0$, there exists an integer $n_0 = n_0(\gamma)$ such that for all $n \ge n_0$ with $4|n$, $t^3_2(n,K_4^3-e) \le \left(1/2+ \gamma \right)n$.
\end{thm}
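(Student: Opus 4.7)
The plan is to apply the \emph{absorbing method} of R\"odl, Ruci\'nski and Szemer\'edi with auxiliary constants $1/n_0 \ll \alpha \ll \beta \ll \gamma$; the proof has an absorbing step, an almost-perfect tiling step, and a short combining step. \textbf{Absorbing lemma.} Call a vertex set $T$ disjoint from a 4-set $S$ an \emph{$S$-absorber} if both $H[T]$ and $H[T \cup S]$ contain $(K_4^3-e)$-factors. I would first prove that there exists $A \subseteq V(H)$ with $|A| \leq \alpha n$ such that $H[A]$ admits a $(K_4^3-e)$-factor and, for every $U \subseteq V(H) \setminus A$ with $|U| \leq \beta n$ and $4 \mid |U|$, the hypergraph $H[A \cup U]$ admits a $(K_4^3-e)$-factor. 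The proof fixes a constant-size absorber gadget on $t$ vertices, for instance an 8-set $T$ in which $H[T]$ decomposes into two copies of $K_4^3-e$ while $H[T \cup S]$ decomposes into three copies of $K_4^3-e$ mixing the vertices of $S$ with those of $T$. Using the codegree hypothesis one shows that each 4-set $S$ has at least $c\, n^{t}$ such absorbers: the vertices of $T$ are chosen greedily, each required to lie in an intersection of boundedly many link-neighborhoods of previously-selected vertices, and each such intersection has size at least $(1/2 + \gamma)n - O(1)$ by inclusion-exclusion. With abundance in hand, random selection of each potential absorber with probability $p = \Theta(n^{1-t})$, together with deletion of overlapping choices, produces $A$ via the standard Chernoff argument.

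\textbf{Almost-perfect tiling.} Next, I would cover all but $o(n)$ vertices of $V(H) \setminus A$ by vertex-disjoint copies of $K_4^3-e$. The natural tool is the weak hypergraph regularity lemma: partition $V(H) \setminus A$ into clusters of equal size, form a weighted cluster graph, and establish a near-perfect fractional $(K_4^3-e)$-tiling in it. Proposition~\ref{prp:lowerimproved} identifies the codegree threshold as $1/2$, and the fractional tiling threshold for $K_4^3-e$ should match at $1/2$; verifying this yields a fractional tiling that one rounds to an almost-perfect integer tiling via standard regularity machinery.

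\textbf{Combining and main obstacle.} The leftover set $U$ after the tiling step satisfies $|U| \leq \beta n$ and $4 \mid |U|$ by divisibility, so the absorbing property of $A$ produces a $(K_4^3-e)$-factor of $H[A \cup U]$; taking its union with the almost-perfect tiling yields a $(K_4^3-e)$-factor of $H$. The main obstacle is the absorbing lemma: the asymmetry of $K_4^3 - e$ --- a distinguished ``root'' vertex contained in all three edges, with the other three vertices playing symmetric roles --- makes the design of the absorber delicate, since one has to pick consistent rootings for each copy in the gadget so that the codegree hypothesis alone is enough to guarantee abundance. The tiling step should then follow standard regularity-method lines once the fractional tiling threshold at $1/2$ is verified, itself a non-trivial but self-contained extremal problem.
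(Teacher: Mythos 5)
Your high-level strategy (absorption plus almost-perfect tiling) matches the paper's, but the absorbing lemma as you sketch it has a genuine gap, and it is precisely the gap that occupies most of the paper. You propose to build each absorber greedily, choosing every new gadget vertex from an intersection of ``boundedly many link-neighborhoods,'' each of size at least $(1/2+\gamma)n - O(1)$, ``by inclusion-exclusion.'' This fails as soon as three or more neighborhoods must be intersected: three sets of size just over $n/2$ in an $n$-set can have empty intersection ($3(1/2+\gamma)n - 2n < 0$ for small $\gamma$), and any absorber for a 4-set $S$ in which $H[T\cup S]$ decomposes into copies of $K_4^3-e$ mixing $S$ with $T$ forces exactly such multi-way intersection constraints. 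At the true threshold $1/2$ (rather than, say, $3/4$) one cannot certify abundance of constant-size absorbers by a direct greedy count; the near-extremal construction $H_{a,b}$ in Section~\ref{sec:lowerbound} shows that pairs of vertices may only be ``linked'' through longer structures. The paper's workaround is the $(i,\eta)$-closeness machinery: absorbers are built from $(x,y)$-connectors of length $i$ for a large constant $i$, and the heart of the proof (Lemma~\ref{lma:(i,eta)-close}, proved via Lemmas~\ref{lma:connectioncomponent}--\ref{lma:3parts}) is showing that $V(H)$ can be partitioned into at most three closed classes which can then be merged using bridges. You correctly identify the absorber design as ``the main obstacle,'' but flagging the obstacle is not the same as resolving it, and the resolution is not a matter of choosing consistent rootings --- it requires the iterated-connector argument.

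On the tiling step, your route differs from the paper's: you invoke weak hypergraph regularity and an unverified claim that the fractional $(K_4^3-e)$-tiling threshold is $1/2$, whereas the paper uses an elementary local-switching argument (Theorem~\ref{thm:approximate}) with a weight function on families of disjoint copies and edges, which covers all but a bounded number of vertices whenever $\delta_2(H) > (n+2l-2)/3$. The paper's approach is both more elementary and quantitatively stronger (constant leftover rather than $o(n)$), though your regularity route would likely suffice for absorption if the fractional threshold were verified. As it stands, both of your two main steps are reduced to substantial unproven claims, so the proposal is an outline of the standard framework rather than a proof.
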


We now present an outline of the proof of Theorem~\ref{thm:t(n,K_4m)}, which uses the absorption technique introduced by R\"odl, Ruci\'{n}ski and Szemer\'{e}di~\cite{MR2500161}.
First, we remove a set $U$ of vertex-disjoint copies of $K_4^3-e$ from $H$ satisfying the conditions of the absorption lemma, Lemma~\ref{lma:absorptionlemma}, and call the resulting graph $H'$ with $\delta_2(H') \ge |H'|/2$.
Next, we find vertex-disjoint copies of $K_4^3-e$ covering all but at most $12$ vertices of $H'$.
Let $W$ be the set of `leftover' vertices.
By the absorption property of $U$ there is a $(K_4^3-e)$-factor in $H[U \cup W]$.
Hence, we obtain a $(K_4^3-e)$-factor in $H$ as required.

We further conjecture that equality holds in Proposition~\ref{prp:lowerimproved}.

\begin{conj} \label{conjecture}
For integers $n > 8$ with $4|n$, $t^3_2(n ,K_4^3 - e) = n/2 - 1$.
\end{conj}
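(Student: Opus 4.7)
The plan is to upgrade Theorem~\ref{thm:t(n,K_4m)} to an exact result by combining a stability argument with a direct treatment of the extremal case. Since Proposition~\ref{prp:lowerimproved} supplies the matching lower bound, it suffices to show that any 3-graph $H$ of order $n$ (with $4 \mid n$ and $n > 8$) satisfying $\delta_2(H) \ge n/2 - 1$ contains a $(K_4^3-e)$-factor. The overall strategy is the standard stability-plus-extremal split used to sharpen asymptotic codegree thresholds.

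The first step would be to prove a stability version of Theorem~\ref{thm:t(n,K_4m)}: for some small $\varepsilon, \gamma > 0$, any $H$ with $\delta_2(H) \ge (1/2 - \gamma)n$ either admits a $(K_4^3-e)$-factor or is within edit distance $\varepsilon n^3$ of one of the finitely many extremal constructions witnessing Proposition~\ref{prp:lowerimproved}. The absorbing lemma used in the proof of Theorem~\ref{thm:t(n,K_4m)} should carry over with essentially the same argument under a weaker codegree hypothesis, provided $H$ is sufficiently far from any extremal construction. The substantive new content is a stability-type analysis of the almost-tiling step, showing that when $H$ is not close to any extremal graph, one can still cover all but a bounded number of vertices by disjoint copies of $K_4^3-e$, after which the absorbing set completes the factor.

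The second step is the extremal case. Fix an extremal construction $H^*$ for Proposition~\ref{prp:lowerimproved} and assume $H$ is $\varepsilon$-close to $H^*$ while $\delta_2(H) \ge n/2 - 1$. Typically $H^*$ is a partition-based construction in which every copy of $K_4^3-e$ must respect some parity or divisibility constraint, so the rigidity of $H^*$ together with the tight codegree hypothesis pins down the global structure of $H$ up to a bounded number of atypical vertices. One then builds the factor by hand: tile the bulk of $H$ using typical copies of $K_4^3-e$ respecting the partition, and absorb the atypical vertices together with any parity imbalance using a short list of tailored gadgets. Since $K_4^3-e$ has vertices of two different degrees (one vertex of degree~$3$ and three of degree~$2$), there is enough flexibility in how a tile can straddle the parts of $H^*$ to carry this out.

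The hard part will be the extremal case. The gap between $n/2 - 1$ and the asymptotic value $(1/2 + \gamma)n$ is too small to tolerate any slack, so every parity or divisibility constraint imposed by an extremal configuration must be tracked precisely, and the absorbing gadget must be designed to correct any such imbalance by exactly the right amount. The exception $t^3_2(8,K_4^3-e) = 4 \ne 8/2 - 1 = 3$ warns that sporadic obstructions do exist for small $n$; ruling them out for all $n > 8$ with $4 \mid n$ will likely require either a careful structural classification of near-extremal $(K_4^3-e)$-free configurations, or a hybrid argument combining the stability analysis with a computer verification for small-to-moderate $n$ where the asymptotic absorbing machinery is not yet effective.
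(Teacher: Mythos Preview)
The statement you are attempting to prove is Conjecture~\ref{conjecture} in the paper, and the paper does \emph{not} prove it. The authors establish only the lower bound (Proposition~\ref{prp:lowerimproved}) and the asymptotic upper bound (Theorem~\ref{thm:t(n,K_4m)}), and then explicitly state ``We further conjecture that equality holds in Proposition~\ref{prp:lowerimproved}.'' There is therefore no proof in the paper to compare your proposal against.

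As for the proposal itself: what you have written is not a proof but a plan, and you say as much (``The hard part will be the extremal case'', ``will likely require\dots''). The stability-plus-extremal template you outline is indeed the standard route to sharpen an asymptotic codegree threshold to an exact one, and it is plausible that something along these lines would eventually work here. But none of the substantive steps are carried out: you have not proved a stability statement, you have not classified the near-extremal configurations (Remark~\ref{rmk:lowerimproved} already shows there is a one-parameter family of extremal graphs when $n\equiv 1\pmod 3$, so ``finitely many extremal constructions'' is not quite right), and you have not exhibited the gadgets needed to absorb atypical vertices and repair the divisibility obstruction $|A|\equiv 0\pmod 3$ under the exact hypothesis $\delta_2(H)\ge n/2-1$. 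Until those pieces are supplied, this remains an open problem, as the paper treats it.
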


\section{Notations and preliminaries}

In the remainder of the paper, we will only consider 3-graphs unless stated otherwise. 
For simplicity, we write $K_4$ and $K_4^-$ for $K_4^3$ and $K_4^3 -e$ respectively.
We refer to the set $\{1, \dots, a\}$ as $[a]$ for $a \in \mathbb{N}$.

For a $3$-graph $H$ and a vertex set $U \subseteq V(H)$, $H[U]$ is the subgraph of $H$ induced by the vertices of~$U$.
We often write $v$ to mean the set $\{v\}$ when there is no risk for confusion.
For a $2$-set $T = \{u,v\}$, the \emph{neighbourhood $N(T)$ (or $N(u,v)$) of $T$} is the set of vertices $v$ such that $T \cup v$ is an edge in~$H$.
Hence, $\deg(T) = |N(T)|$ and similarly we write $\deg(u,v) = \deg(T)$ for $T = \{u,v\}$.
Let $V_1, \dots, V_l$ be a partition of~$V(H)$.
We say that an edge $v_1v_2v_3$ is \emph{of type $V_{i_1}V_{i_2}V_{i_3}$} if $v_j \in V_{i_j}$ for $ j \in [3]$ and denote the number of edges of type $V_{i_1}V_{i_2}V_{i_3}$ by $e(V_{i_1}V_{i_2}V_{i_3})$.
Similarly, we define \emph{types} for $K_4^-$, where the location of the vertex of degree~3 does not matter.
Given a 3-set~$T$, let $L(T)$ denote the set of vertices $v$ such that $H[T \cup v]$ contains a $K_4^-$.

\begin{prp} \label{prp:edgeextension}
Let $H$ be a $3$-graph of order~$n$.
Then for every edge $e$, $|L(e)| \ge (3 \delta_2(H) - n)/2$.
\end{prp}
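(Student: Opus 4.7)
My plan is to reformulate $L(e)$ in terms of codegree neighborhoods and then use a short double-counting argument. Write $e = \{a,b,c\}$. Since $e$ is already an edge, for any $v \notin e$ the induced 4-vertex subgraph $H[e \cup \{v\}]$ contains a $K_4^-$ (three edges on four vertices) if and only if at least two of the triples $\{a,b,v\}$, $\{a,c,v\}$, $\{b,c,v\}$ are edges of $H$. Equivalently, $v \in L(e)$ iff $v \in V \setminus e$ lies in at least two of $N(a,b)$, $N(a,c)$, $N(b,c)$.

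I would then classify the vertices of $V \setminus e$ by how many of these three neighborhoods they lie in: let $a_i$ count vertices of $V\setminus e$ in exactly $i$ of them, so that $|L(e)| = a_2 + a_3$ and $a_0 + a_1 + a_2 + a_3 = n - 3$. Counting pair--vertex incidences gives
\[
a_1 + 2a_2 + 3a_3 \;=\; \sum_{\{x,y\} \subset e} \bigl(\deg(x,y) - 1\bigr) \;\ge\; 3\delta_2(H) - 3,
\]
where the $-1$ accounts for the fact that, since $e$ is itself an edge, the third vertex of $e$ sits in each of the three pair-neighborhoods and must be removed when restricting to $V \setminus e$.

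Subtracting the identity $a_0+a_1+a_2+a_3 = n-3$ from this incidence sum yields $a_2 + 2a_3 \ge 3\delta_2(H) - n + a_0 \ge 3\delta_2(H) - n$, and the elementary bound $a_2 + a_3 \ge (a_2 + 2a_3)/2$, valid because $a_2 \ge 0$, immediately gives $|L(e)| \ge (3\delta_2(H)-n)/2$. There is essentially no obstacle: the proof is a one-line double count. The only genuine bookkeeping point is not to forget the $-1$ correction for each neighborhood when switching from $V$ to $V\setminus e$; otherwise the argument collapses into elementary integer arithmetic.
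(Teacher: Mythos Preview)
Your proof is correct and is essentially identical to the paper's: both classify vertices by how many of the three pair-neighbourhoods they lie in, double-count incidences to get $a_2 + 2a_3 \ge 3\delta_2(H) - n$, and then halve. The only cosmetic difference is that the paper counts over all of $V$ (the three vertices of $e$ each land in exactly one neighbourhood, so they sit in $n_1$ and the equality $|L(e)| = n_2 + n_3$ still holds), whereas you restrict to $V\setminus e$ and track the $-1$ corrections explicitly; either bookkeeping convention works.
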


\begin{proof}
Let $e = xyz$.
For $i=0,1,2,3$, let $n_i$ denote the number of vertices belonging to exactly $i$ neighbourhoods of $\{x,y\}$, $\{x,z\}$ and $\{y,z\}$.
For example, $n_3 = |N(x,y) \cap N(x,z) \cap N(y,z)|$.
Note that $\sum n_i = n$ and $\sum in_i \ge 3 \delta_2(H)$.
Thus, $2n_3 +n_2 \ge 3 \delta_2(H) - n$.
If a vertex $v$ is in at least two neighbourhoods of $\{x,y\}$, $\{x,z\}$ and $\{y,z\}$, then $H[\{x,y,z,v\}]$ contains a~$K_4^-$.
Thus, the proposition follows as $|L(e)| = n_2 +n_3$.
\end{proof}

The \emph{Tur\'an number} of $K_4^-$, $\ex(n,K_4^-)$, is the maximum number of edges in a $K_4^-$-free 3-graph of order $n$.
Currently, it is known that $ (2/7 +o(1)) \binom{n}3 \le \ex(n,K_4^-)  \le (0.2871+o(1) )\binom{n}3$, where the lower bound is due to Frankl and F{\"u}redi~\cite{MR753720} and the upper bound is due to Baber and Talbot~\cite{MR2769186}.
If $H$ is a 3-graph of order $n$ with $e(H) > \ex(n,K_4^-) + c n^3$, then we have the `supersaturation' phenomenon discovered by Erd{\H{o}}s and Simonovits~\cite{MR726456}.

\begin{thm}[Supersaturation] \label{thm:supersaturation}
For every constant $c>0$, there exists a constant $c'>0$ such that every 3-graph $H$ of order~$n$ with $e(H) > \ex(n,K_4^-) + c n^3$ contains at least $c' n^4$ copies of $K_4^-$.
For every constant $c>0$, there exists a constant $c'>0$ such that every 3-graph $H$ of order~$n$ with $e(H) > \ex(n,K_4^-) + c n^3$ contains at least $c' n^4$ copies of $K_4^-$.
\end{thm}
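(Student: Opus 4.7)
The plan is to apply the classical averaging argument of Erd\H{o}s and Simonovits. The key ingredient is the monotonicity of the normalised Tur\'an number $\ex(n, K_4^-)/\binom{n}{3}$ in $n$: for an extremal $K_4^-$-free 3-graph $H_0$ on $n$ vertices and any $m \le n$, averaging over $m$-subsets $S$ gives $e(H_0)\binom{m}{3}/\binom{n}{3} = \mathbb{E}[e(H_0[S])] \le \ex(m, K_4^-)$, hence $\ex(n, K_4^-)/\binom{n}{3} \le \ex(m, K_4^-)/\binom{m}{3}$. Thus the sequence decreases to some $\pi := \pi(K_4^-) \ge 0$ and satisfies $\ex(n, K_4^-)/\binom{n}{3} \ge \pi$ for every $n$.

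Given $c > 0$, I would fix a constant $m = m(c)$ large enough that $\ex(m, K_4^-) \le (\pi + c)\binom{m}{3}$, which is available by convergence to $\pi$. For a uniformly random $m$-subset $S \subseteq V(H)$, linearity of expectation gives
\[
\mathbb{E}[e(H[S])] = \frac{\binom{m}{3}}{\binom{n}{3}}\, e(H) > \frac{\binom{m}{3}}{\binom{n}{3}}\bigl(\ex(n, K_4^-) + cn^3\bigr) \ge \pi\binom{m}{3} + 6c\binom{m}{3},
\]
using $\ex(n, K_4^-)/\binom{n}{3} \ge \pi$ and $n^3/\binom{n}{3} \ge 6$. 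Combined with the choice of $m$, this forces $\mathbb{E}[e(H[S])] > \ex(m, K_4^-) + 5c\binom{m}{3}$.

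From here I would deduce that a non-trivial fraction of $m$-subsets contain a copy of $K_4^-$. Since $e(H[S]) \le \binom{m}{3}$ always and $e(H[S]) \le \ex(m, K_4^-)$ whenever $H[S]$ is $K_4^-$-free, the proportion $P$ of $S$ for which $H[S]$ contains a $K_4^-$ satisfies $P\binom{m}{3} \ge \mathbb{E}[e(H[S])] - \ex(m, K_4^-) > 5c\binom{m}{3}$, so $P > 5c$. Double-counting pairs $(T,S)$ with $T \in \binom{V(H)}{4}$ spanning a $K_4^-$ and $S \in \binom{V(H)}{m}$ containing $T$---each good $S$ contributes at least one $T$, and each such $T$ lies in exactly $\binom{n-4}{m-4}$ sets $S$---then yields at least $P\binom{n}{m}/\binom{n-4}{m-4} = P\binom{n}{4}/\binom{m}{4} \ge c' n^4$ copies of $K_4^-$, for a constant $c' > 0$ depending only on $c$ (through the fixed $m$).

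No step here presents a serious obstacle; the argument is entirely standard, and the only point to watch is choosing $m$ as a function of $c$ via the monotonicity and convergence of $\ex(n, K_4^-)/\binom{n}{3}$, and then taking $n$ sufficiently large so that crude estimates such as $n^3/\binom{n}{3} \ge 6$ absorb lower-order terms.
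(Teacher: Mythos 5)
Your proof is correct and is precisely the standard Erd\H{o}s--Simonovits supersaturation argument (monotonicity of $\ex(n,K_4^-)/\binom{n}{3}$ via averaging over $m$-subsets, then double counting), which is exactly what the paper relies on: it does not prove Theorem~\ref{thm:supersaturation} itself but cites Erd\H{o}s and Simonovits~\cite{MR726456} for it. The only cosmetic caveat is that very small $n$ should be absorbed by shrinking $c'$, which your closing remark about taking $n$ large implicitly handles.
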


\begin{cor} \label{cor:supersaturation}
There exists a constant $c'>0$ such that every 3-graph $H$ of order~$n$ with $e(H) > 0.3 \binom{n}3$ contains at least $c' n^4$ copies of $K_4^-$.
\end{cor}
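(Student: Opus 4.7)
The plan is to combine the Baber--Talbot upper bound $\ex(n,K_4^-)\le(0.2871+o(1))\binom{n}{3}$ stated just before Theorem~\ref{thm:supersaturation} with the supersaturation theorem itself. The gap between the hypothesis $e(H)>0.3\binom{n}{3}$ and the Tur\'an bound is a positive constant fraction of $\binom{n}{3}$ for all sufficiently large $n$, which is exactly the hook needed to invoke Theorem~\ref{thm:supersaturation}.

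More concretely, first I would choose $n_0$ large enough so that the $o(1)$ term in the Baber--Talbot bound satisfies $\ex(n,K_4^-)\le 0.29\binom{n}{3}$ for every $n\ge n_0$. For any $3$-graph $H$ of order $n\ge n_0$ with $e(H)>0.3\binom{n}{3}$, we then get
\[
  e(H) \;>\; \ex(n,K_4^-) + 0.01\binom{n}{3} \;\ge\; \ex(n,K_4^-) + c\, n^3,
\]
for a suitable absolute constant $c>0$ (for instance, $c=1/1000$ works for $n\ge n_0$). Applying Theorem~\ref{thm:supersaturation} with this $c$ produces a constant $c_0>0$ such that $H$ contains at least $c_0 n^4$ copies of $K_4^-$.

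The only remaining issue is the finite range $4\le n<n_0$, which cannot be handled by supersaturation but is harmless: there are only finitely many $3$-graphs in this range, and I would shrink $c'$ (setting $c' = \min\{c_0,\, 1/n_0^4\}$, or simply noting that the corollary can be used for $n$ large enough since it is only applied in that regime in the remainder of the paper). Since the statement only asserts the existence of a single constant $c'>0$, this adjustment poses no difficulty. The main — and essentially only — content of the proof is thus the short chain of inequalities above; the key ingredient being quoted is the Baber--Talbot upper bound, which strictly separates the edge density $0.3$ from the Tur\'an density of $K_4^-$.
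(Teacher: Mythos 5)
Your proposal is correct and is exactly the argument the paper intends: the corollary is stated as an immediate consequence of the Baber--Talbot bound $\ex(n,K_4^-)\le(0.2871+o(1))\binom{n}{3}$ together with Theorem~\ref{thm:supersaturation}, via precisely the chain of inequalities you write. The only caveat is that your fallback $c'=1/n_0^4$ for small $n$ presupposes at least one copy of $K_4^-$, which can fail (e.g.\ $n=4$ with two edges), so your alternative remark --- that the corollary is only invoked for large $n$ --- is the right way to dispose of that range.
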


Given an integer $i \ge 1$ and vertices $x,y \in V(H)$, we say that the vertex set $S \subseteq V(H)$ is an \emph{$(x,y)$-connector of length $i$} if $S \cap \{x,y\} = \emptyset$, $|S| = 4i -1$ and both $H[S \cup x]$ and $H[S \cup y]$ contain $K_4^-$-factors.
Given an integer $i\ge 1$ and a constant $\eta>0$, two vertices $x$ and $y$ are $(i, \eta)$-close to each other if there exist at least $\eta n^{4i-1}$ $(x,y)$-connectors of length~$i$.
We denote by $\widetilde{N}_{i,\eta} (x)$ the set of vertices $y$ that are \emph{$(i, \eta)$-close} to $x$.
A subset $U \subseteq V$ is said to be \emph{$(i , \eta)$-closed in $H$} if every two vertices in $U$ are $(i, \eta)$-close to each other.
Moreover, $H$ is said to be \emph{$(i , \eta)$-closed} if $V(H)$ is $(i , \eta)$-closed in~$H$.
If $\eta$ is known from context, we simply write $i$-closed and $\widetilde{N}_{i} (x)$ for $(i, \eta)$-closed and $\widetilde{N}_{i,\eta} (x)$ respectively.
For $X,Y \subseteq V$, a triple $(x,y,S)$ is an \emph{$(X,Y)$-bridge of length $i$} if $x \in X$, $y \in Y$ and $S$ is an $(x,y)$-connector of length $i$.
If $u \in X \cap Y$, then we say $(u,u, \emptyset)$ is an $(X,Y)$-bridge of length~$0$.

Next we study some basic properties of $(i,\eta)$-closeness.

\begin{prp} \label{prp:i-closedadditive}
Let $i \ge 1$ be an integer and let $\eta, \varepsilon >0$ be constants.
Let $n$ be a sufficiently large integer and let $H$ be a $3$-graph of order $n$.
Suppose that $| \widetilde{N}_{i, \eta}(x)| \ge \varepsilon n$ for a vertex $x \in V$.
Then, $ \widetilde{N}_{i, \eta}(x) \subseteq  \widetilde{N}_{i+1, \eta'}(x)$ for some constant $\eta' >0$.
\end{prp}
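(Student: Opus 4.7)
The plan is to construct many $(x,y)$-connectors of length $i+1$ by padding an $(x,y)$-connector of length $i$ with a disjoint copy of $K_4^-$ on four new vertices. Concretely, if $S_0$ is an $(x,y)$-connector of length $i$ and $R \subseteq V \setminus (S_0 \cup \{x,y\})$ is a $4$-set such that $H[R]$ contains a $K_4^-$, then $S := S_0 \cup R$ has size $4(i+1)-1$, and $H[S \cup x]$ (respectively $H[S \cup y]$) decomposes into $H[S_0 \cup x] \cup H[R]$ (respectively $H[S_0 \cup y] \cup H[R]$), each admitting a $K_4^-$-factor of size $i+1$. Thus $S$ is an $(x,y)$-connector of length $i+1$. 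Since $y \in \widetilde{N}_{i,\eta}(x)$, there are at least $\eta n^{4i-1}$ admissible choices of $S_0$, so the task reduces to showing that, for each such $S_0$, there are $\Omega(n^4)$ admissible $R$'s.

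The key step, which is where the hypothesis $|\widetilde{N}_{i,\eta}(x)| \ge \varepsilon n$ enters, is to establish that $H$ contains $\Omega(n^4)$ copies of $K_4^-$ avoiding any given constant-size vertex set. I would prove this by double-counting triples $(z, S_1, R)$ with $z \in \widetilde{N}_{i,\eta}(x) \setminus \{x\}$, with $S_1$ an $(x,z)$-connector of length $i$, and with $R$ a $K_4^-$-copy appearing in the $K_4^-$-factor of $H[S_1 \cup z]$. This gives at least $(\varepsilon n - 1) \cdot \eta n^{4i-1} \cdot i$ triples. On the other hand, a fixed $4$-set $R$ inducing $K_4^-$ can arise in at most $O(n^{4i-4})$ ways: if $z \in R$ there are at most $4$ choices for $z$ and at most $\binom{n}{4i-4}$ extensions to $S_1$, while if $z \notin R$ there are at most $n$ choices for $z$ and at most $\binom{n}{4i-5}$ extensions. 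Dividing yields $\Omega(n^4)$ distinct $K_4^-$-copies in $H$. Since each vertex lies in $O(n^3)$ of these copies on average, removing any constant-size set still leaves $\Omega(n^4)$ valid $R$'s.

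Combining the two ingredients, the number of pairs $(S_0, R)$ is at least $\eta n^{4i-1} \cdot \Omega(n^4) = \Omega(n^{4i+3})$. Each $(x,y)$-connector $S$ of length $i+1$ arises from at most $\binom{4i+3}{4}$ such decompositions (one per $4$-subset of $S$ inducing $K_4^-$ whose complement in $S$ is a length-$i$ connector), so the number of length-$(i+1)$ $(x,y)$-connectors is at least $\eta' n^{4i+3} = \eta' n^{4(i+1)-1}$ for some $\eta' > 0$, witnessing $y \in \widetilde{N}_{i+1,\eta'}(x)$ and thereby the desired inclusion.

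The main obstacle is the multiplicity bound in the double-count for the $K_4^-$-copies: one must ensure that the upper bound on the number of $(z, S_1)$-extensions of a fixed $R$ does not swallow the gain factor of $n$ needed to reach $\Omega(n^4)$. This is precisely why we route through an intermediate vertex $z \in \widetilde{N}_{i,\eta}(x)$ rather than $y$ itself: the linear abundance of such $z$ supplies the extra factor of $n$ beyond what one gets from length-$i$ $(x,y)$-connectors alone.
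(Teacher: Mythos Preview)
Your proposal is correct and follows essentially the same approach as the paper: both arguments pad a length-$i$ $(x,y)$-connector $S_0$ with a vertex-disjoint copy of $K_4^-$, and both obtain the required $\Omega(n^4)$ copies of $K_4^-$ by the same double-count over auxiliary vertices $z \in \widetilde{N}_{i,\eta}(x)$ and $(x,z)$-connectors, extracting a $K_4^-$-block from the $K_4^-$-factor of $H[S_1 \cup z]$. The only cosmetic difference is that the paper fixes $S_0$ first and directly counts $K_4^-$-copies disjoint from $S_0 \cup \{x,y\}$, whereas you first count all $K_4^-$-copies and then discard the $O(n^3)$ that meet the forbidden set; both routes are equally valid.
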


\begin{proof}
Let $y \in \widetilde{N}_{i}(x)$ and $m = 4i-1$.
To prove the proposition, it is enough to show that $y$ is $(i+1,\eta')$-close to $x$ for some $\eta' >0$.
There are at least $\eta n^m$ $(x,y)$-connectors $S$ of length~$i$.
Fix an $(x,y)$-connector $S$ of length~$i$.
Let $z \in \widetilde{N}_{i}(x) \setminus ( S \cup \{x,y\} )$.
There are at least $\eta n^m$ $(x,z)$-connectors $S'$ of length~$i$.
Moreover, the number of $S'$ containing a vertex in $S \cup y$ is at most $(m+1) n^{m-1} < \eta n^m/2$.
Hence, there are at least $\eta n^m/2$ $(x,z)$-connectors $S'$ with $S' \cap (S \cup y) = \emptyset$.
Since $H[S' \cup z]$ contains a $K_4^-$-factor, there is a 3-set $T \subseteq S'$ such that $z \in L(T)$.
By an averaging argument, the number of $K_4^-$ vertex-disjoint from $S \cup \{x, y\}$ is at least 
\begin{align*}
\frac{\eta n^m/2}{n^{m-3}} \times \frac{\varepsilon n - m-2}4 > \eta \varepsilon n^4 /16.
\end{align*}
Recall that $S$ is an $(x,y)$-connector of length $i$, so if $U$ spans a $K_4^-$ in $H$ and $U \cap (S \cup \{x,y\}) = \emptyset $, then $S \cup U$ is an $(x,y)$-connector of length $i+1$.
Note also that there are 
\begin{align*}
	\frac{\eta n^m/2 \times \eta \varepsilon n^4 /16}{\binom{m+4}4} > \eta' n^{m+4}
\end{align*}
such choices $S \cup U$ for some constant $\eta' >0$.
Hence, $y$ is $(i+1,\eta')$-close to~$x$.
\end{proof}

\begin{lma} \label{lma:bridge}
Let $i_X,i_Y >0$ and $i \ge 0$ be integers and let $\eta_X, \eta_Y, \eta, \varepsilon >0$ be constants.
Let $n$ be a sufficiently large integer and let $H$ be a $3$-graph of order $n$.
Suppose that $x$ and $y$ are distinct vertices in $V(H)$.
Suppose there are at least $\varepsilon n^{4i+1}$ copies of $(X,Y)$-bridges of length $i$, where $X = \widetilde{N}_{i_X,\eta_X}(x)$ and $Y = \widetilde{N}_{i_Y,\eta_Y}(y)$.
Then, $x$ and $y$ are $(i_X+i_Y+i,\eta_0)$-close to each other for some $\eta_0 >0$.
In particular, if $|X \cap Y| \ge \varepsilon n$, then 
$x$ and $y$ are $(i_X+i_Y,\eta)$-close to each other for some $\eta >0$.

Furthermore, if $X$ and $Y$ are $(i_X,\eta_X)$-closed and $(i_Y,\eta_Y)$-closed in $H$ and $|X|,|Y| \ge \varepsilon n$, then $X \cup Y$ is $(i_X+i_Y+i,\eta)$-closed in~$H$.
\end{lma}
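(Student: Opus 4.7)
The plan is to build long $(x,y)$-connectors by concatenation. Starting from an $(X,Y)$-bridge $(x',y',S)$ of length $i$, choose an $(x,x')$-connector $S_X$ of length $i_X$ (which exists in at least $\eta_X n^{4i_X-1}$ ways since $x'\in X$) and a $(y',y)$-connector $S_Y$ of length $i_Y$ (at least $\eta_Y n^{4i_Y-1}$ ways since $y'\in Y$), and form
\[
T \;=\; S_X\cup\{x'\}\cup S\cup\{y'\}\cup S_Y.
\]
A quick count gives $|T|=(4i_X-1)+1+(4i-1)+1+(4i_Y-1)=4(i_X+i_Y+i)-1$. Provided these five pieces together with $\{x,y\}$ are pairwise disjoint, the $K_4^-$-factors witnessing that $S_X,S,S_Y$ are connectors glue: $H[T\cup x]$ is covered by the factors on $S_X\cup\{x\}$, $S\cup\{x'\}$, $S_Y\cup\{y'\}$, and $H[T\cup y]$ by those on $S_X\cup\{x'\}$, $S\cup\{y'\}$, $S_Y\cup\{y\}$. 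So $T$ is an $(x,y)$-connector of length $i_X+i_Y+i$.

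It remains to count how many distinct such $T$ are produced. The hypothesis gives $\ge\varepsilon n^{4i+1}$ bridges. For each fixed bridge the forbidden set $S\cup\{x,y\}$ has bounded size, so at most $O(n^{4i_X-2})$ of the available $(x,x')$-connectors meet it, leaving $\ge\tfrac12\eta_X n^{4i_X-1}$ good choices of $S_X$; the same estimate with $S\cup S_X\cup\{x,y\}$ gives $\ge\tfrac12\eta_Y n^{4i_Y-1}$ good $S_Y$'s. The resulting count of ordered tuples is at least $(\varepsilon\eta_X\eta_Y/4)\,n^{4(i_X+i_Y+i)-1}$, and since the number of ways to split a fixed $T$ back into $(S_X,x',S,y',S_Y)$ is bounded by a constant depending only on $i_X,i_Y,i$, one obtains at least $\eta_0 n^{4(i_X+i_Y+i)-1}$ distinct $(x,y)$-connectors, yielding the claimed closeness. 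The \emph{in particular} clause is the specialisation $i=0$: each $u\in X\cap Y$ supplies the length-$0$ bridge $(u,u,\emptyset)$, so $|X\cap Y|\ge\varepsilon n=\varepsilon n^{4\cdot 0+1}$ feeds the same argument.

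For the \emph{furthermore} clause, take arbitrary $u,v\in X\cup Y$. If both lie in $X$, then $X\setminus\{u\}\subseteq\widetilde{N}_{i_X,\eta_X}(u)$ has size $\ge\varepsilon n-1$, so Proposition~\ref{prp:i-closedadditive} applies; iterated $i_Y+i$ times it upgrades the $(i_X,\eta_X)$-closeness of $u,v$ to $(i_X+i_Y+i,\eta)$-closeness, and the case $u,v\in Y$ is symmetric. If $u\in X$ and $v\in Y$, apply the first part of the lemma with $(u,v)$ playing the role of $(x,y)$: since $X\setminus\{u\}\subseteq\widetilde{N}_{i_X}(u)$ and $Y\setminus\{v\}\subseteq\widetilde{N}_{i_Y}(v)$, every $(X,Y)$-bridge whose endpoints and internal set avoid $\{u,v\}$ is automatically a $(\widetilde{N}_{i_X}(u),\widetilde{N}_{i_Y}(v))$-bridge, and the bridges hit by $u$ or $v$ number only $O(n^{4i})$, so at least $\tfrac12\varepsilon n^{4i+1}$ survive.

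The main obstacle is bookkeeping rather than any deep combinatorics: one must track the disjointness of the five pieces of $T$, argue that discarding connectors or bridges that hit a small exceptional set costs only a subleading factor of $n$, and keep the overcounting factor a constant depending only on $i_X,i_Y,i$, so that a single uniform lower bound of the right polynomial order $n^{4(i_X+i_Y+i)-1}$ emerges.
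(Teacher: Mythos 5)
Your proposal is correct and follows essentially the same route as the paper: concatenate an $(x,x')$-connector, the bridge, and a $(y',y)$-connector after discarding the few bridges and connectors meeting the already-chosen vertices, then divide out a constant overcounting factor; the length-$0$ bridges $(u,u,\emptyset)$ give the \emph{in particular} clause. Your treatment of the \emph{furthermore} clause (splitting into the cases $u,v\in X$, $u,v\in Y$, and $u\in X$, $v\in Y$, and iterating Proposition~\ref{prp:i-closedadditive} to equalise the closeness parameters) is just a more explicit version of the paper's one-line appeal to that proposition.
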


\begin{proof}
Let $i_0 = i_X+i_Y+i$ and let $\eta_0 >0$ be a sufficiently small constant.
Let $m_0 = 4i_0-1$, $m = 4i-1$, $m_X = 4i_X-1$ and $m_Y = 4i_Y-1$.
There are at most $(m+2)n^{m+1} < \varepsilon n^{m+2}$ copies of $(X,Y)$-bridges $(x',y',S)$ of length $i$ with $\{x,y\} \cap (S \cup \{x',y'\}) \ne \emptyset$.
Hence, the number of $(X,Y)$-bridges $(x',y',S)$ with $x' \in X \setminus (S \cup \{ x,y\})$ and $y' \in Y \setminus (S \cup \{ x,y\})$ is at least $\varepsilon n^{m+2}/2$.
Fix one such $(X,Y)$-bridge $(x',y',S)$.
Since $x' \in X \setminus x$, the number of $(x,x')$-connectors $S_X$ of length $i_X$ such that $S_{X} \cap (S \cup \{ x,x',y,y'\}) = \emptyset$ is at least 
\begin{align*}
\eta_X n^{m_X} - (m+4)   n^{m_X-1} \ge \eta_X n^{m_X}/2
\end{align*}
and fix one such $S_{X}$.
Similarly, the number of $(y,y')$-connectors $S_Y$ of length $i_Y$ such that $S_{Y} \cap (S \cup S_X \cup \{ x,x',y,y'\}) = \emptyset$ is at least 
\begin{align*}
\eta_Y n^{m_Y} - (m_X+m+4)n^{m_Y-1} \ge \eta_{Y} n^{m_Y}/2
\end{align*}
and fix one such $S_{Y}$.
Set $S_0 = S_{X} \cup S_{Y} \cup S \cup \{x',y'\}$.
Note that $S_0$ is an $(x,y)$-connector of length $i_0$.
Moreover, there are at least 
\begin{align}
	\frac{1}{\binom{m_0}{m,1,1,m_X,m_Y}} \times \frac{\varepsilon n^{m+2}}{2} \times \frac{\eta_{X} n^{m_X}}2  \times \frac{\eta_{Y} n^{m_Y}}2 \ge \eta n^{m_0} \nonumber
\end{align}
distinct $S_0$, so $x$ and $y$ are $(i_0,\eta_0)$-close to each other.
The second assertion holds as $(z,z,\emptyset)$ is an $(X,Y)$-bridge of length 0 for $z \in X \cap Y$.
Finally, the last assertion holds by Proposition~\ref{prp:i-closedadditive}.
\end{proof}

We now state the absorption lemma for $K_4^-$-factors, which is a special case of Lemma~1.1 in~\cite{lo2011f}.
We present its proof for completeness.

\begin{lma}[Absorption lemma] \label{lma:absorptionlemma}
Let $i \ge 1$ be an integer and let $\eta >0$ be a constant.
Then, there is an integer $n_0$ satisfying the following: Suppose that $H$ is a 3-graph of order $n \ge n_0$ and $H$ is $(i,\eta)$-closed.
Then there exists a vertex subset $U \subseteq V(H)$ of size $|U| \le \eta^{4} n/ (3 \times 2^8 i )$ such that $H[U \cup W]$ contains a $K_4^-$-factor for every vertex set $W \subseteq V \setminus U$ of size $|W|  \le \eta^{8} n/(2^{12} 3^2 i^2 ) $ with $|W|+|U| =0 \pmod{4}$.
\end{lma}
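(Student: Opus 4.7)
The plan is to follow the absorbing-method framework of R\"odl, Ruci\'nski, and Szemer\'edi, translating all local flexibility into $(i,\eta)$-closedness.

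\emph{Step 1. Absorbers for a $4$-set.} For a $4$-set $Z = \{z_1,z_2,z_3,z_4\} \subseteq V(H)$, call $A \subseteq V(H) \setminus Z$ an \emph{absorber for $Z$} if both $H[A]$ and $H[A \cup Z]$ admit $K_4^-$-factors. I would build absorbers of size $16i$ explicitly: choose four vertices $u_1,u_2,u_3,u_4 \in V \setminus Z$ spanning a $K_4^-$ in $H$, and for each $j \in [4]$ choose a $(z_j,u_j)$-connector $S_j$ of length $i$, all four $S_j$'s pairwise disjoint and disjoint from $Z \cup \{u_1,\ldots,u_4\}$. Then $A := \{u_1,\ldots,u_4\} \cup S_1 \cup \cdots \cup S_4$ has $|A| = 4 + 4(4i-1) = 16i$, the family $\{S_j \cup \{u_j\} : j \in [4]\}$ is a $K_4^-$-factor of $H[A]$ (by the connector property), and $\{S_j \cup \{z_j\} : j \in [4]\} \cup \{\{u_1,\ldots,u_4\}\}$ is a $K_4^-$-factor of $H[A \cup Z]$. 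For the counting, note that $H$ contains $\Omega(n^4)$ copies of $K_4^-$ (each connector exhibited by closedness already contains $i$ of them), and given all previously-fixed data, closedness yields at least $\eta n^{4i-1} - O(n^{4i-2}) \ge \eta n^{4i-1}/2$ choices for each $S_j$ once $n$ is large. Hence each $4$-set $Z$ admits at least $c\,\eta^4 n^{16i}$ absorbers for an absolute constant $c > 0$.

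\emph{Step 2. Random family.} Pick each $16i$-subset $A$ of $V$ independently at random with probability $p := \alpha/n^{16i-1}$, for an appropriate constant $\alpha = \Theta(\eta^4/i)$, and let $\mathcal{F}$ be the collection of chosen sets that happen to be absorbers for \emph{some} $4$-set. Standard Chernoff estimates together with a union bound over the $\binom{n}{4}$ choices of $Z$ guarantee that, with positive probability, $\mathcal{F}$ satisfies
(a) $|\mathcal{F}| \le \eta^4 n / (2 \cdot 3 \cdot 2^8 i)$;
(b) for every $4$-set $Z \subseteq V$, the number of members of $\mathcal{F}$ that are absorbers for $Z$ is at least $2\eta^8 n / (2^{12} \cdot 3^2 i^2)$;
(c) the number of pairs of intersecting members of $\mathcal{F}$ is at most $\eta^8 n / (2^{12} \cdot 3^2 i^2)$.
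Deleting one member of each intersecting pair gives a pairwise disjoint subfamily $\mathcal{F}'$ still satisfying (a) with the same bound and (b) with the constant halved.

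\emph{Step 3. Verification.} Set $U := \bigcup_{A \in \mathcal{F}'} A$, so $|U| \le \eta^4 n / (3 \cdot 2^8 i)$ and $4 \mid |U|$ (each absorber has size $16i$). Given $W \subseteq V \setminus U$ with $|W| + |U| \equiv 0 \pmod 4$ and $|W| \le \eta^8 n / (2^{12} \cdot 3^2 i^2)$, we have $4 \mid |W|$; partition $W$ arbitrarily into $4$-sets $Z_1,\ldots,Z_s$ where $s = |W|/4$. By (b), each $Z_j$ has more than $s$ absorbers in $\mathcal{F}'$, so we may greedily pick distinct $A_1,\ldots,A_s \in \mathcal{F}'$ with $A_j$ an absorber for $Z_j$. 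Combining the $K_4^-$-factors of $H[A_j \cup Z_j]$ for $j \in [s]$ with the $K_4^-$-factors of $H[A]$ for each remaining $A \in \mathcal{F}' \setminus \{A_1,\ldots,A_s\}$ produces a $K_4^-$-factor of $H[U \cup W]$.

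The main obstacle is Step 1: extracting $\Omega(\eta^4 n^{16i})$ absorbers for \emph{every} individual $4$-set from nothing but $(i,\eta)$-closedness. The bookkeeping needed to keep the four connectors and the $K_4^-$ on $\{u_1,\ldots,u_4\}$ mutually disjoint and disjoint from $Z$ is routine but unavoidable, and it is what drives the specific constants $\eta^4$, $\eta^8$, $2^8$, $2^{12}$, $i$, $i^2$ appearing in the statement.
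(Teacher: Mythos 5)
Your proposal follows essentially the same route as the paper: explicit absorbers for each $4$-set built from connectors supplied by $(i,\eta)$-closedness, a random selection pruned of intersecting pairs, and greedy absorption of $W$ four vertices at a time. The only real difference is the absorber design: the paper uses a $12i$-set (a $K_4^-$ through $v_1$ itself plus three connectors for the pairs $u_j,v_j$), whereas your $16i$-set needs four connectors plus a separate $K_4^-$, which costs an extra factor of $\eta$ in the absorber count (your construction actually yields $\Omega(\eta^5 n^{16i})$, not $\eta^4$) and hence slightly weaker explicit constants than those in the statement — harmless for the application, where $|W|\le 16$.
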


\begin{proof}
Let $H$ be a $3$-graph of order $n \ge n_0$ such that $H$ is $(i,\eta)$-closed.
Throughout the proof we may assume that $n_0$ is chosen to be sufficiently large.
Set $m_1 = 4i-1$ and $m= 3m_1+3 = 12i$.
Furthermore, call an $m$-set $A \in \binom{V}m$ an \emph{absorbing} $m$-set for a $4$-set $T\in \binom{V}4$ if $A \cap T= \emptyset$ and both $H[A]$ and $H[A \cup T]$ contain $K_4^-$-factors.
Denote by $\mathcal{L}(T)$ the set of all absorbing $m$-sets for $T$.
Next, we show that for every $4$-set $T$, there are many absorbing $m$-sets for~$T$.

\begin{clm} \label{clm:numberofabsorbingm-set}
For every $4$-set $T \in \binom{V}4$, $|\mathcal{L}(T)| \ge (\eta/2)^{4}\binom{n}{m}$.
\end{clm}

\begin{proof}
Let $T= \{v_1, v_2, v_3, v_4\}$ be a fixed 4-set.
Since $v_1$ and $u$ are $(i,\eta)$-connected for $u \notin T$, the number of $m_1$-sets $S$ such that $H[S \cup v_1]$ contains a $K_4^-$-factor is at least $\eta n^{m_1}$.
Hence, by an averaging argument there are at least $\eta n^{3}$ copies of $K_4^-$ containing~$v_1$.
Since $n_0$ is large, there are at most $3n^2 \le \eta n^{3}/2$ copies of $K_4^-$ containing $v_1$ and $v_j$ for some $2 \le j \le 4$.
Thus, there are at least $\eta n^{3}/2$ copies of $K_4^-$ containing $v_1$ but none of $v_2$, $v_3$, $v_{4}$.
We fix one such copy of $K_4^-$ with $V(K_4^-) = \{ v_1, u_2, u_3, u_4\}$.
Set $U_1 = \{ u_2, u_3, u_4\}$ and $W_0 = T$.

For each $2\le j \le 4$ and each pair $u_j, v_j$ suppose we have succeed in choosing an $m_1$-set $U_j$ such that $U_j$ is disjoint from $W_{j-1} = U_{j-1} \cup W_{j-2}$ and both $H[U_j \cup  u_j ]$ and $H[U_j \cup  v_j]$ contain $K_4^-$-factors.
Then for a fixed $2 \le j \le 4$ we call such a choice $U_j$ \emph{good}, motivated by $A = \bigcup_{1 \le j\le 4} U_j$ being an absorbing $m$-set for~$T$.

In each step $2\le j \le 4$, recall that $u_j$ is $(i,\eta)$-closed to $v_j$, so the number of $m_1$-sets $S$ such that $H[S \cup u_j]$ and $H[S \cup v_j]$ contain $K_4^-$-factors is at least $\eta n^{m_1}$.
Note that there are $7+(j-2)m_1$ vertices in $W_{j-1}$.
Thus, the number of such $m_1$-sets $S$ intersecting $W_{j-1}$ is at most 
\begin{align*}
(7+(j-2)m_1)n^{m_1-1} \le (7+2 m_1)n^{m_1-1} < \eta n^{m_1}/2.
\end{align*}
For each $2\le j \le 4$ there are at least $\eta n^{m_1}/2$ choices for $U_j$ and in total we obtain $(\eta/2)^{4}n^{m}$ absorbing $m$-sets for $T$ with multiplicity at most $m!$, so the claim holds.
\end{proof}

Now, choose a family $\mathcal{F}$ of $m$-sets by selecting each of the $\binom{n}{m}$ possible $m$-sets independently at random with probability $p = \eta^{4} n / (2^{7} m^2 \binom{n}{m})$.
Then, by Chernoff's bound (see e.g.~\cite{MR1885388}) with probability $1-o(1)$ as $n \rightarrow \infty$, the family $\mathcal{F}$ satisfies the following properties:
\begin{align}
|\mathcal{F}| \le & \eta^{4} n / (2^6 m^2) \label{eqn:|F|}
\intertext{and}
|\mathcal{L}(T) \cap \mathcal{F}| \ge & \eta^8 n/ (2^{12}m^2) \label{eqn:L(T)}
\end{align} 
for all $4$-sets $T$.
Furthermore, we can bound the expected number of intersecting $m$-sets by
\begin{align}
	\binom{n}{m} \times m \times \binom{n}{m-1} \times p^2 \le \frac{3\eta^8 n }{2^{14}m^2}.
\nonumber
\end{align}
Thus, using Markov's inequality, we derive that with probability at least~$1/2$
\begin{align}
	\textrm{$\mathcal{F}$ contains at most $\frac{\eta^8 n }{2^{13}m^2}$ intersecting pairs.} \label{eqn:F}
\end{align}
Hence, with positive probability the family $\mathcal{F}$ has all properties stated in \eqref{eqn:|F|}, \eqref{eqn:L(T)} and \eqref{eqn:F}.
By deleting all the intersecting $m$-sets and non-absorbing $m$-sets in such a family $\mathcal{F}$, we get a subfamily $\mathcal{F}'$ consisting of pairwise vertex-disjoint $m$-sets, which satisfies
\begin{align}
|\mathcal{L}(T) \cap \mathcal{F}'| \ge & \frac{\eta^8 n }{2^{12}m^2}- \frac{3\eta^8 n }{2^{14}m^2} = \frac{\eta^8 n }{2^{14}m^2} \nonumber
\end{align}
for all $4$-sets $T$.
Set $U = V(\mathcal{F}')$ and so $|U| \le \eta^4 n /(2^6 m)$ by~\eqref{eqn:|F|}.
Since $\mathcal{F}'$ consists only of absorbing $m$-sets, $H[U]$ has a $K_4^-$-factor.
So $|U| = 0 \pmod4$.
For any set $W \subseteq V \backslash U$ of size $|W| \le \eta^8 n/ (2^{12}m^2)$ and $ |W| \in 4 \mathbb{Z}$, $W$ can be partition into at most $\eta^8 n/ (2^{14}m^2)$ $4$-sets.
Each $4$-set can be successively absorbed using a different absorbing $m$-set, so $H[U \cup W]$ contains a $K_4^-$-factor. 
\end{proof}

\section{A lower bound on $t_2^3(n,K_4^3-e)$} \label{sec:lowerbound}

In this section, we are going to bound $t_2^3(n,K_4^-)$ from below, thereby proving Proposition~\ref{prp:lowerimproved}.

\begin{proof}[Proof of Proposition~\ref{prp:lowerimproved}]
For integers $a,b>0$,  let $A$ and $B$ be two disjoint vertex sets with $|A| = a$ and $|B| = b$.
We define a 3-graph $H_{a,b}$ on the vertex set $A \cup B$ such that every edge contains odd number of vertices in $B$.
Hence, every edge in $H_{a,b}$ is of type $AAB$ or $BBB$.
Note that $\delta_2(H_{a,b}) = \min\{ b,a-1,b-2 \}$ by considering $\deg(v,v')$, $\deg(v,w)$, $\deg(w,w')$ for distinct $v,v' \in A$ and distinct $w , w' \in B$.
Moreover, every $K_4^-$ in $H_{a,b}$ is of type $AAAB$ or $BBBB$ and so every $K_4^-$ in $H_{a,b}$ contains exactly 0 or 3 vertices of~$A$.
Thus, $H_{a,b}$ does not contain a $K_4^-$-factor if $a \ne 0 \pmod{3}$.

Recall that $n = 0 \pmod{4}$.
If $n \ne 0 \pmod{3}$, then $t_2^3(n, K_4^-) >  n/2 -2$ by considering $H_{n/2, n/2}$.
If $n = 0 \pmod{3}$, then $t_2^3(n, K_4^-) >  n/2 -2$ by considering $H_{n/2-1, n/2+1}$.
\end{proof}

\begin{rmk} \label{rmk:lowerimproved}
Actually, to show that $t_2^3(n,K_4^-) \ge n/2-1$ for $n = 1 \pmod{3}$, we could consider $H_{n/2-1, n/2 + 1}$ instead of $H_{n/2, n/2}$.
This can be done since $n/2 = 2 \pmod{3}$ and so $n/2-1 \ne 0 \pmod{3}$.
In fact, for $n = 1 \pmod3$, we can define a family of $3$-graphs $H$ with $\delta_2(H) = n/2-2$ with no $K_4^-$-factors as follows.
Let $A = \{v_1, \dots, v_{n/2-1} \}$ and $B = \{ w_1, \dots, w_{n/2}\}$ be two disjoint vertex sets.
Let $z$ be a vertex disjoint from $A$ and $B$.
For a given integer $1 \le l \le n/2$, define $H_l$ to be the $3$-graph on $A \cup B \cup  z $ with edge set $E(H_l) = E_1 \cup E_2 \cup E_3$ such that
\begin{align*}
	E_1 & = \left\{ T \in \binom{A \cup B}3 : |T \cap B|  = 1 \pmod{2} \right\},\\
	E_2 & = \{ z v_i v_j, z w_i w_j : i < \min\{ j,  l\}  \},\\
	E_3 & = \{ z v_i w_j : l\le \min\{ i , j\}  \}.
\end{align*}
(Notice that $H_1 = H_{n/2,n/2}$ and $H_{n/2-1} = H_{n/2-1,n/2+1}$.)
Note that $N(z,v_i) = A \setminus v_i$ for $i <l$ and $N(z, v_i) = \{v_1, v_2, \dots, v_{l-1}, w_l, w_{l+1}, \dots, w_{n/2} \}$ for $i \ge l$.
Thus, $\deg(z,v) \ge |A| -1 = n/2 -2$ for $v \in A$, and by a similar argument $\deg(z,w) \ge n/2 -2$ for $w \in B$.
Note that $H_l[A \cup B]$ is isomorphic to $H_{n/2-1, n/2}$.
Hence, $\delta(H_l) = n/2-2$.

Next, we are going to show that $H_l$ does not contain a $K_4^-$-factor.
Suppose the contrary, $H_l$ contains a $K_4^-$-factor.
Note that every $K_4^-$ in $H_l[A \cup B]$ is of type $AAAB$ or $BBBB$.
Since $|A| = n/2-1 = 1 \pmod{3}$ and $H_l$ contains a $K_4^-$-factor, there exists a $K_4^-$ with vertex set $\{z,v_i,w_j,w_k\}$ for some $i,j,k \in [n/2]$ with $j < k$.
Note that $v_iw_jw_k$ is not an edge in $H_l$, so $zv_iw_j$, $zv_iw_k$, $zw_jw_k$ are edges in $H_l$.
By the definition of $E_2$, we deduce that $j < l$ as $zw_jw_k \in E(H_l)$.
This is a contradiction as $zv_iw_j \in E(H_l)$.
Therefore $H_l$ does not contain a $K_4^-$-factor.
\end{rmk}

\section{An upper bound on $t_2^3(n,K_4^3-e)$} \label{sec:upperbound}

In the next theorem, we study the relationship between $\delta_2(H)$ and the number of the vertex-disjoint copies of $K_4^-$ in~$H$.
Note that $|V(H)|$ is not assumed to be divisible by 4 in the hypothesis.

\begin{thm}	\label{thm:approximate}
Let $l$ and $n$ be  integers with $0 \le l \le (n-13)/4$.
Let $H$ be a $3$-graph of order $n$ with $\delta_2(H) > (n +2l-2)/3$.
Then, there exist at least $l$ vertex-disjoint copies of $K_4^-$ in~$H$.
\end{thm}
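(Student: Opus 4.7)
I plan to argue by induction on $l$, the base case $l=0$ being vacuous. For the inductive step, the hypotheses for $l$ imply those for $l-1$ with the same $n$, so the induction hypothesis yields $l-1$ vertex-disjoint copies of $K_4^-$ in $H$, say $C_1,\dots,C_{l-1}$. Setting $U = \bigcup_{i} V(C_i)$ and $V' = V(H)\setminus U$, we have $|U| = 4l-4$ and $|V'| = n-4l+4 \ge 17$. It remains to exhibit one further copy of $K_4^-$ disjoint from $C_1,\dots,C_{l-1}$.

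The key tool will be Proposition~\ref{prp:edgeextension}: since $3\delta_2(H)-n > 2l-2$, every edge $e \in E(H)$ satisfies $|L_H(e)| \ge l$. If $H[V']$ contains a copy of $K_4^-$, we are done. Otherwise, for every edge $e \in E(H[V'])$ the set $L_H(e)$ lies in $U$; pigeonhole on $l-1$ copies of size $4$ then produces $a,b \in V(C_i) \cap L_H(e)$ for some $i$. Writing $e = uvw$, both $\{u,v,w,a\}$ and $\{u,v,w,b\}$ span $K_4^-$. Since at least three of the four $3$-subsets of $V(C_i)$ are edges of $H$, at most one of $a,b$ gives a non-edge upon removal; by relabelling assume $T := V(C_i)\setminus\{a\}$ is an edge of $H$. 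Proposition~\ref{prp:edgeextension} then yields $|L_H(T)| \ge l$, and any $x \in L_H(T) \cap (V' \setminus \{u,v,w\})$ produces a copy of $K_4^-$ on $T \cup \{x\}$ vertex-disjoint from $\{u,v,w,a\}$; replacing $C_i$ in the family by these two new copies gives $l$ vertex-disjoint copies of $K_4^-$, completing the inductive step.

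The main technical obstacle will be guaranteeing the existence of such an $x$. A priori it is possible that $L_H(T) \subseteq U \cup \{u,v,w\}$ for every configuration, and escaping this will likely require a global double-counting over all choices of $(e, i, a)$-configurations that exploits the slack $|V'| \ge 17$—equivalently $n - 4l \ge 13$, which is precisely where the constant $13$ in the hypothesis enters—to force some choice of $x$ into $V'$. The remaining subcase $E(H[V']) = \emptyset$ is easier: then every pair in $V'$ satisfies $N_H(u,v) \subseteq U$, so $\delta_2(H) \le 4l-4$; combined with $\delta_2(H) > (n+2l-2)/3$ and $n \ge 4l+13$ this confines $(n,l)$ to a narrow window, handled by a direct analysis of the bipartite-like structure between $V'$ and $U$.
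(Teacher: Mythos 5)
Your plan has the right flavour (local exchange arguments driven by Proposition~\ref{prp:edgeextension}), but both of the places where you defer the work are genuine gaps, and they are exactly where the difficulty of the theorem lives. First, in the main subcase you need some $x \in L_H(T)\setminus(U\cup\{u,v,w\})$, but Proposition~\ref{prp:edgeextension} only gives $|L_H(T)|\ge l$ while $|U\cup\{u,v,w\}|=4l-1$, so there is no contradiction from a single configuration, and the ``global double-counting over $(e,i,a)$-configurations'' is not specified; it is not clear any such count closes, because the sets $L_H(T)$ for different $T\subseteq V(C_i)$ can all legitimately sit inside $U$. Second, the subcase $E(H[V'])=\emptyset$ is not a narrow window: it gives $\delta_2(H)\le 4l-4$, hence $(n+2l-2)/3<4l-4$, i.e.\ $n\le 10l-11$, which together with $n\ge 4l+13$ leaves \emph{every} $l\ge 4$ and an interval of $n$ of length about $6l$. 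No argument is offered there, and producing a $K_4^-$ meeting $V'$ in two vertices and then repairing the two damaged $C_j$'s is not routine.

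The paper's proof avoids both problems by changing the object being optimised: it takes a family $\mathcal{T}$ of vertex-disjoint copies of $K_4^-$ \emph{and edges}, maximising the weight $w(\mathcal{T})=5|\mathcal{T}_1|+2|\mathcal{T}_2|$. Edges act as intermediate currency: an edge of $\mathcal{T}_2$ can be upgraded to a $K_4^-$ (gain $\ge 3$), a $K_4^-$ can be traded for three edges through disjoint leftover pairs (gain $1$), and the K\"onig--Egerv\'ary theorem is used to extract two or three \emph{disjoint} such upgrades from a single $S\in\mathcal{T}_1$, which is what makes the counts $\sum_i|L(e_i)\cap S|\ge 5$ and $\sum_i|N(x_i,y_i)\cap S|\ge 9$ sufficient. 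Your induction keeps only $K_4^-$'s in the family, so these multi-step exchanges are unavailable; to complete your argument you would essentially have to reintroduce this weighted mixed family, at which point you recover the paper's proof. As written, the proposal does not prove the theorem.
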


\begin{proof}
Let $\mathcal{T}$ be a set of vertex-disjoint copies of $K_4^-$ and edges in $H$.
Let $\mathcal{T}_1$ and $\mathcal{T}_2$ be the set of $K_4^-$ and edges of~$\mathcal{T}$ respectively.
If $|\mathcal{T}_1| \ge l$, then we are done.
Hence, we may assume that $|\mathcal{T}_1| < l$ for all $\mathcal{T}$.
We define the weighting $w (\mathcal{T})$ of $\mathcal{T}$ to be $w(\mathcal{T}) = 5|\mathcal{T}_1| + 2 |\mathcal{T}_2|$.
We assume that $\mathcal{T}$ is chosen such that $w(\mathcal{T})$ is maximum.

First, we are going to show that $|\mathcal{T}_2| < 4$.
Suppose the contrary, so there are 4 disjoint edges $e_1, e_2, e_3 , e_4 \in \mathcal{T}_2$.
Note that if $v  \in L(e_i)$ for some $1 \le i \le 4$, then $v \in V(\mathcal{T}_1)$.
Otherwise, $\mathcal{T}'  =  (\mathcal{T} \setminus \{e_i, e_0\}) \cup \{ V(e_i) \cup  v\}$ contradicts the maximality of $w(\mathcal{T})$, where $e_0$ is the edge in $\mathcal{T}_2$ that contains $v$ if it exists.
By Proposition~\ref{prp:edgeextension}, $|L(e_i)| \ge  (3 \delta_2(H) -n)/2 > l-1$ for $i \in [4]$.
Thus, there exists $S = \{v_1,v_2,v_3,v_4\} \in \mathcal{T}_1$ such that $\sum_{i \in [4]} |L(e_i) \cap S| \ge 5$.
Without loss of generality, we may assume by the K\"{o}nig-Egerv\'{a}ry Theorem (see~\cite{MR2368647} Theorem~8.32) that $v_1 \in L(e_1)$ and $v_2 \in L(e_2)$.
Set $\mathcal{T}'  =  (\mathcal{T} \setminus \{S, e_1,e_1\}) \cup \{ V(e_1) \cup  v_1, V(e_2) \cup  v_2\}$.
Note that 
\begin{align*} 
w(\mathcal{T}') = w(\mathcal{T}) -(5+2+2)+(5+5) = w(\mathcal{T})+1,
\end{align*}
a contradiction.
Thus, we have $|\mathcal{T}_2| <4$.

Note that 
\begin{align*}
|V \setminus V(\mathcal{T})| \ge n - 4|\mathcal{T}_1| - 3 |\mathcal{T}_2| \ge n - 4(l-1) - 9 = n - 4l -5 \ge 8.
\end{align*}
Let $x_1$, \dots, $x_4$, $y_1$, \dots, $y_4$ be distinct vertices in $V \setminus V(\mathcal{T})$.
Since $w(\mathcal{T})$ is maximum, $N(x_i,y_i) \subseteq V(\mathcal{T})$.
If $\sum_{i \in [4]} |N(x_i,y_i) \cap V(\mathcal{T}_2)| > 4|\mathcal{T}_2|$, there exists an edge $e \in \mathcal{T}_2$ such that $\sum_{i \in [4]} |N(x_i,y_i) \cap V(e)| \ge 5$.
By the K\"{o}nig-Egerv\'{a}ry Theorem, we may assume that $x_1y_1 v_1$ and $x_2y_2v_2$ are edges for distinct vertices $v_1, v_2 \in V(e)$.
Hence, $w(\mathcal{T}') = w(\mathcal{T}) +2$, where $\mathcal{T}'  =  \mathcal{T} \setminus e \cup \{x_1y_1 v_1 , x_2y_2v_2 \}$, a contradiction.
Therefore, $\sum_{i \in [4]} |N(x_iy_i) \cap V(\mathcal{T}_2)| \le 4|\mathcal{T}_2|$.
Recall that $|\mathcal{T}_2| \le 3$ and so
\begin{align*}
	\sum_{i \in [4]} |N(x_i,y_i) \cap V(\mathcal{T}_1)| \ge 4 \delta_2(H) - 12 > 8 |\mathcal{T}_1|.
\end{align*}
By an averaging argument, there exists $S = \{v_1,v_2,v_3,v_4\} \in \mathcal{T}_1$ such that $\sum |N(x_i,y_i) \cap S| \ge 9$.
Again by the K\"{o}nig-Egerv\'{a}ry Theorem, we may assume without loss of generality that $x_iy_iv_i$ is an edge for $i \in [3]$.
Set
\begin{align}
\mathcal{T}'  = ( \mathcal{T} \setminus S )\cup \{x_1y_1v_1, x_2y_2v_2, x_3y_3v_3\}. \nonumber
\end{align}
Note that $w(\mathcal{T}') - w(\mathcal{T}) \ge 3 \times 2 -5 = 1$, a contradiction.
This completes the proof of the theorem.
\end{proof}

Next, we are going to prove Theorem~\ref{thm:t(n,K_4m)}. 
We proceed by the absorption technique of R\"{o}dl, Ruci\'{n}ski and Szemer\'{e}di~\cite{MR2500161}.
We require the following lemma, which is proven in Section~\ref{sec:(i,eta_i)-closed}.
\begin{lma} \label{lma:(i,eta)-close}
Let $\gamma > 0$ and let $H$ be a $3$-graph of sufficiently large order $n$ with $\delta_2(H) \ge (1/2+\gamma) n$.
Then, $H$ is $(i,\eta)$-closed for some integer $i$ and constant $\eta >0$.
\end{lma}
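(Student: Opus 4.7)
My plan is to show that $H$ is $(i_0,\eta)$-closed for some integer $i_0$ and constant $\eta=\eta(\gamma)>0$ in two stages: (a) prove that every vertex $x$ has a large $(i_1,\eta_1)$-closeness neighborhood of size at least $(1/2+\gamma/2)n$, and then (b) apply Lemma~\ref{lma:bridge} pairwise to merge these neighborhoods.

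\emph{Stage (a).} For each $x\in V(H)$, I will show $|\widetilde{N}_{i_1,\eta_1}(x)|\ge (1/2+\gamma/2)n$ for some integer $i_1$ and $\eta_1=\eta_1(\gamma)>0$. The starting point is the link graph $L_x$ on $V\setminus\{x\}$, which has minimum degree at least $(1/2+\gamma)n$, so $L_x$ is dense and contains $\Omega_\gamma(n^3)$ triangles. Given a triangle $\{a,b,c\}$ in $L_x$, any vertex $y\in N(a,b)\cap N(a,c)\cap N(b,c)$ yields a triangle in $L_y$ as well, and hence an $(x,y)$-connector of length~$1$, since both $H[\{x,a,b,c\}]$ and $H[\{y,a,b,c\}]$ then have three edges through $x$ or $y$ respectively. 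Double-counting these configurations (over triangles of $L_x$ and over valid $y$) together with the supersaturation of $K_4^-$-copies given by Corollary~\ref{cor:supersaturation} (note $e(H)>0.3\binom{n}{3}$ whenever $\delta_2(H)\ge(1/2+\gamma)n$) and averaging shows that at least $\alpha n$ vertices $y$ are $(1,\eta')$-close to $x$, for some $\alpha,\eta'>0$ depending on $\gamma$. To boost this to $(1/2+\gamma/2)n$, I iterate Proposition~\ref{prp:i-closedadditive}, which monotonically enlarges $\widetilde{N}_{i,\eta_i}(x)$ by one level at each step, arguing that the iteration cannot plateau below $(1/2+\gamma/2)n$: any unabsorbed complement $W$ of size $>(1/2-\gamma/2)n$ would still receive sufficient codegree from $x$ and from the existing ball $\widetilde{N}_{i,\eta_i}(x)$ to produce enough bridges, forcing a further closeness step by Lemma~\ref{lma:bridge}.

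\emph{Stage (b).} For any pair $x,y\in V(H)$, set $X=\widetilde{N}_{i_1,\eta_1}(x)$ and $Y=\widetilde{N}_{i_1,\eta_1}(y)$. Since $|X|,|Y|\ge (1/2+\gamma/2)n$, inclusion--exclusion yields $|X\cap Y|\ge 2(1/2+\gamma/2)n-n=\gamma n$. The ``in particular'' clause of Lemma~\ref{lma:bridge} (with $i_X=i_Y=i_1$, $i=0$, and $\varepsilon=\gamma$) then gives that $x$ and $y$ are $(2i_1,\eta)$-close for some $\eta>0$. Hence $H$ is $(2i_1,\eta)$-closed, proving the lemma with $i=2i_1$.

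\emph{Main obstacle.} The technical heart is Stage~(a), specifically lifting the easy ``positive-fraction'' bound on $|\widetilde{N}_1(x)|$ to the sharper $(1/2+\gamma/2)n$ bound needed for inclusion--exclusion to work in Stage~(b). When $\gamma$ is small, the pairwise codegree condition alone gives no useful lower bound on the triple intersection $|N(a,b)\cap N(a,c)\cap N(b,c)|$, so the naive triangle-counting in $L_x\cap L_y$ must be supplemented by the alternative ``cherry plus an edge of $H$'' configurations (where $S\in E(H)$ and $L_x[S]$, $L_y[S]$ each have only two edges, which still suffice for $K_4^-$). Controlling the iterative bootstrap so that both the index $i_1$ and the parameter $\eta_1$ remain bounded in terms of $\gamma$ will be the most delicate part.
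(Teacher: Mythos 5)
Your Stage (b) is sound as far as it goes, but it rests entirely on the Stage (a) claim that every closure ball $\widetilde{N}_{i_1,\eta_1}(x)$ has size at least $(1/2+\gamma/2)n$, and that claim is exactly the hard content of the lemma; your sketch does not establish it. What the codegree condition gives directly (via Proposition~\ref{prp:edgeextension}, $|L(e)|\ge(3\delta_2(H)-n)/2$) is only a ball of size about $(1/4+\gamma)n$ --- this is Proposition~\ref{prp:1connectionradius} --- and the naive ``grow the ball by adding vertices with many connectors into it'' iteration can genuinely stall well below $n/2$: the paper's Lemma~\ref{lma:connectioncomponent} obtains only a partition of $V$ into up to \emph{three} classes, each closed in $H$ and of size at least $(1/4+3\gamma/4)n$. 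Your assertion that the iteration ``cannot plateau below $(1/2+\gamma/2)n$'' because the complement ``would still receive sufficient codegree \dots to produce enough bridges'' is precisely the statement that requires proof, and it is not a soft consequence of codegree: when $V$ splits into parts $X,Y$ with very few copies of $K_4^-$ meeting both sides (compare the near-extremal graphs $H_{a,b}$ of Section~\ref{sec:lowerbound}, in which every $K_4^-$ has $0$ or $3$ vertices in $A$), manufacturing $(X,Y)$-bridges is delicate. The paper does this in Lemmas~\ref{lma:XYbridge}, \ref{lma:2parts} and \ref{lma:3parts}: a four-way case analysis on crossing edges and crossing copies of $K_4$ and $K_4^-$, combined with supersaturation inside each part (Corollary~\ref{cor:supersaturation}, which relies on the Baber--Talbot bound for $\ex(n,K_4^-)$) and a quadratic optimization in $|X|$ that uses $\delta_2(H)\ge(1/2+\gamma)n$ at full strength. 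None of this substance appears in your proposal, so the argument has a genuine gap at its central step.

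Two smaller points. First, Proposition~\ref{prp:i-closedadditive} only shows $\widetilde{N}_{i,\eta}(x)\subseteq\widetilde{N}_{i+1,\eta'}(x)$; it never enlarges the ball by itself, so the entire burden of growth falls on the bridge-production step you left unproved. Second, the opening of your Stage (a) --- counting triangles in the link of $x$ and vertices $y$ lying in all three codegree neighbourhoods to get a positive fraction of vertices $(1,\eta')$-close to $x$ --- is essentially Proposition~\ref{prp:1connectionradius} and is fine, but note it cannot by itself yield more than roughly $(3\delta_2(H)-n)/2\approx n/4$ vertices, which is why the paper is forced into the multi-class analysis rather than the inclusion--exclusion shortcut you propose.
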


\begin{proof}[Proof of Theorem~\ref{thm:t(n,K_4m)}]
Let $\gamma>0$ and let $H$ be a 3-graph $H$ of sufficiently large order $n$ with $4|n$ and $\delta_2(H) \ge (1/2+\gamma)n$.
In order to prove Theorem~\ref{thm:t(n,K_4m)}, it is enough to show that $H$ contains a $K_4^-$-factor. 
By Lemma~\ref{lma:(i,eta)-close}, $H$ is $(i,\eta)$-closed for some $i$ and $\eta >0$.
We may further take $\eta$ to be sufficiently small ($\eta^4/(3 \times 2^8 i ) < \gamma$ would do).
Let $U$ be the vertex set given by Lemma~\ref{lma:absorptionlemma} and so $|U| \le \eta^4 n /(3 \times 2^8 i)$.
Let $H' = H[V(H) \setminus U]$.
Note that 
\begin{align}
	\delta_{2}(H') \ge (1/2 + \gamma - \eta^4/ ( 3 \times 2^8 i ) )n \ge n'/2 \nonumber
\end{align}
where $n' = n - |U|$.
There exists a family $\mathcal{T}$ of vertex-disjoint copies of $K_4^-$ in $H'$ covering all but at most $16$ vertices by Theorem~\ref{thm:approximate}.
Let $W = V(H') \setminus V(\mathcal{T})$, so $|W| \le 16$.
By Lemma~\ref{lma:absorptionlemma}, there exists a $K_4^-$-factor $\mathcal{T}'$ in $H[U \cup W]$.
Thus, $\mathcal{T} \cup \mathcal{T}'$ is a $K_4^-$-factor in~$H$.
\end{proof}

\section{Proof of Lemma~\ref{lma:(i,eta)-close}.} \label{sec:(i,eta_i)-closed}

Let $\gamma > 0$ and let $H$ be a $3$-graph of sufficiently large order $n$ with $\delta_2(H) \ge (1/2+\gamma) n$.
Our aim is to show that $H$ is $(i,\eta)$-closed for some $i$ and $\eta >0$ proving Lemma~\ref{lma:(i,eta)-close}.
Its proof is divided into the following steps.
First we show that we can partition $V(H)$ into at most 3 vertex classes such that each class is $(\lceil 4/ \gamma \rceil +2, \eta)$-closed in~$H$ and has size at least $n/4$.
If there is only one vertex class, then we are done.
When there are two or three vertex classes, we show that $H$ is $(i',\eta')$-closed using Lemma~\ref{lma:2parts} and Lemma~\ref{lma:3parts} respectively for some integer $i'$ and constant $\eta'>0$.

Recall that $\widetilde{N}_{i,\eta}(v)$ is the set of vertices that are $(i,\eta)$-closed to~$v$.
First, we show that the size of $\widetilde{N}_{1,\gamma^2/12}(v)$ is at least $(1/4+\gamma)n$ for every $v \in V$.

\begin{prp} \label{prp:1connectionradius}
Let $\gamma > 0$ and let $H$ be a $3$-graph of order $n > 8/\gamma$ with $\delta_2(H) \ge (1/2+\gamma) n$.
Then, for $v \in V$ there are at least $(1/4+\gamma)n$ vertices $y$ such that $y$ is $(1,\gamma^2/12)$-close to $v$.
\end{prp}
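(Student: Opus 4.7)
The goal is to show that, for each $v \in V$, at least $(1/4+\gamma)n$ vertices $y$ are $(1,\gamma^2/12)$-close to $v$. My plan is to double-count the number of $(v,y)$-connectors of length~1 using Proposition~\ref{prp:edgeextension} as the main lever. For each $y \in V \setminus \{v\}$, write $f(v,y)$ for the number of 3-sets $S \subseteq V \setminus \{v,y\}$ with $v,y \in L(S)$; the target is $f(v,y) \ge \gamma^2 n^3/12$ for at least $(1/4+\gamma)n$ choices of $y$.

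The central structural observation is the following. Whenever a pair $\{a,b\} \subseteq V \setminus \{v,y\}$ satisfies both $vab \in E(H)$ and $yab \in E(H)$, Proposition~\ref{prp:edgeextension} applied successively to the edges $vab$ and $yab$ yields
\[
|L(vab)|,\; |L(yab)| \;\ge\; \frac{3\delta_2(H)-n}{2} \;\ge\; \frac{n}{4} + \frac{3\gamma n}{2}.
\]
Hence every $u \in L(vab) \cap L(yab) \setminus \{v,y,a,b\}$ gives a $(v,y)$-connector $S = \{a,b,u\}$ of length~1, since both $H[\{v,a,b,u\}]$ and $H[\{y,a,b,u\}]$ contain $K_4^-$. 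Because each 3-set $S$ can be realised from at most three triples $(\{a',b'\},u')$ (one for each distinguished vertex of $S$), I obtain the pointwise estimate
\[
3 f(v,y) \;\ge\; \sum_{\{a,b\}:\, vab,\,yab \in E(H)} |L(vab)\cap L(yab)|.
\]

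To turn this into the claimed size bound, I would sum this inequality over $y \ne v$, swap the order of summation, and group by pairs $\{a,b\}$ with $vab \in E$. For each such pair, the set of eligible $y$ is $N(a,b)\setminus\{v\}$, of size at least $(1/2+\gamma)n-1$ by the codegree hypothesis; and for each such $y$ a second application of Proposition~\ref{prp:edgeextension} controls $|L(yab)|$. A routine bookkeeping then produces a lower bound of order $\gamma n^4$ for $\sum_{y \ne v} f(v,y)$. A pigeonhole then finishes the proof: if fewer than $(1/4+\gamma)n$ vertices $y$ had $f(v,y) \ge \gamma^2 n^3/12$, the sum $\sum_{y \ne v} f(v,y)$ would be bounded above by $(1/4+\gamma)n \cdot \max_y f(v,y) + n \cdot \gamma^2 n^3/12$, which contradicts the lower bound once the leading constants are balanced; the coefficient $\gamma^2/12$ is consistent with the two independent codegree contributions of order $\gamma$, one per edge.

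The main obstacle is sharpness of the inner inequality. The naive inclusion-exclusion
\[
|L(vab) \cap L(yab)| \;\ge\; |L(vab)| + |L(yab)| - (n-2)
\]
evaluates to roughly $3\gamma n - n/2$, which is \emph{negative} whenever $\gamma < 1/6$, so no individual pair $\{a,b\}$ yields a useful positive bound on its own. The proof therefore cannot proceed pair-by-pair: it must exploit the abundance of common pairs in the links of $v$ and $y$. Here the codegree hypothesis enters a second time, as $|N(v,a) \cap N(y,a)| \ge 2\gamma n + O(1)$ for every $a$, producing about $\gamma n^2$ pairs $\{a,b\}$ with $vab, yab \in E(H)$, and only the aggregate contribution of these many pairs --- not any single one --- can meet the threshold. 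Managing this averaging so that the resulting constants match $(1/4+\gamma)n$ and $\gamma^2/12$ simultaneously is the delicate part of the argument.
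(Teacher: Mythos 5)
Your reduction of the problem to lower-bounding $\sum_{\{a,b\}}|L(vab)\cap L(yab)|$ is where the argument breaks, and you have in fact diagnosed the failure yourself without repairing it. Requiring a vertex $u$ to lie in the intersection of the two link sets $L(vab)$ and $L(yab)$ is a particular (and lossy) way of certifying a connector, and as you observe, inclusion--exclusion gives $|L(vab)|+|L(yab)|-n\approx(3\gamma-1/2)n$, which is negative in the relevant range of $\gamma$. The promised rescue --- that the ``aggregate contribution'' of the roughly $\gamma n^2$ common pairs $\{a,b\}$ is positive --- is never established and is not routine: summing a pointwise bound that is vacuous for every single pair cannot produce a positive total, and you offer no mechanism for why these intersections are nonempty even on average. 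Moreover, even granting a lower bound of order $\gamma n^4$ on $\sum_{y}f(v,y)$, your pigeonhole cannot deliver $(1/4+\gamma)n$ good vertices: since $f(v,y)\le\binom{n}{3}$, your stated upper bound $(1/4+\gamma)n\cdot\max_y f(v,y)+n\cdot\gamma^2 n^3/12$ is roughly $n^4/24$, so a contradiction would require the average of $f(v,y)$ to be at least a $(1/4+\gamma)$-fraction of its maximum possible value, not merely $\Omega(\gamma n^3)$.

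The paper's proof avoids both difficulties by applying Proposition~\ref{prp:edgeextension} to the connector itself rather than to two auxiliary edges. One first shows, using the codegree condition (for an edge $vxy$ and any $z\in N(x,y)\cap N(v,x)$, a set of size at least $2\gamma n$, the $4$-set $\{v,x,y,z\}$ spans a $K_4^-$), that there are at least $\gamma n^3/6$ edges $e$ with $v\in L(e)$. For every such edge $e$, Proposition~\ref{prp:edgeextension} gives $|L(e)|\ge(1/4+3\gamma/2)n$ directly, so the single $3$-set $V(e)$ is a $(v,w)$-connector for a $(1/4+3\gamma/2)$-fraction of all vertices $w$ \emph{simultaneously}; no intersection of two link sets is ever taken, and the coefficient $1/4$ in the conclusion comes straight from $(3\delta_2(H)-n)/2$. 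A bipartite double count between these edges and $V\setminus v$ then shows that more than $(1/4+\gamma)n$ vertices $y$ lie in $L(e)$ for at least $\gamma|E'|/2\ge\gamma^2 n^3/12$ of these edges $e$, which is exactly the claimed $(1,\gamma^2/12)$-closeness. If you wish to keep your framework, replace the pair-based parametrisation of connectors by this edge-based one.
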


\begin{proof}
Write $\delta = \delta_2(H)$ and $V' = V \setminus v$.
Let $\{x,y\} \in N(v)$, i.e. $vxy$ is an edge.
Note that there are at least $\delta (n-1)/2 \ge n^2/4$ such pairs.
For $z \in N(x,y) \cap N(v,x)$, $H[\{v,x,y,z\}]$ contains a $K_4^-$.
Since $|N(x,y) \cap N(v,x)| \ge 2 \gamma n$, there are at $\gamma n^3/6$ edges $e=xyz$ such that $v \in L(e)$.

Let $G$ be a bipartite 2-graph with the following properties.
The vertex classes of $G$ are $V'$ and $E'$, where $E'$ is a set of edges $e$ such that $v \in L(e)$.
For $y \in V'$ and $e \in E'$, $\{y,e\}$ is an edge in $G$ if and only if $y \in L(e)$.
Note that $|E'| \ge \gamma n^3/6$.
For $e \in E'$
\begin{align}
d^G(e) = |L(e) \setminus v|  \ge \left(1/4+ 3\gamma/2 \right) n -1 > \left(1/4+ 11\gamma/8 \right) n\nonumber
\end{align}
by Proposition~\ref{prp:edgeextension}.
We claim that there are more than $(1/4+\gamma) n$ vertices $y \in V'$ with $d^G(y) \ge \gamma |E'|/2$.
Indeed, it is true or else we have
\begin{align}
\left(1/4+ 11\gamma/8 \right) n |E'|
< e(G) \le \gamma |E'|/2 \times (3/4 - \gamma) n + |E'| (1/4+ \gamma)n, \nonumber
\end{align}
a contradiction.
Note that $y \in V'$ is $(1, d^G(y)/n^{3})$-close to $v$, so the proposition follows.
\end{proof}

We are going to partition $V$ into at most three classes such that each class is of size at least $(1/4+\gamma)n$ and $( \lceil 4/\gamma \rceil+2, \eta)$-closed in $H$ for some $\eta >0$.

\begin{lma} \label{lma:connectioncomponent}
Let $\gamma > 0$ and let $H$ be a $3$-graph of order $n$ with $\delta_2(H) \ge (1/2+\gamma) n$.
Then, there exist a constant $\eta>0$ and a vertex partition of $V$ into at most three classes such that each class $W$ is $(\lceil 4/ \gamma \rceil +2, \eta)$-closed in $H$ and $|W| \ge (1/4+3\gamma/4) n$.
\end{lma}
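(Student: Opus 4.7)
The plan is to iteratively grow the close-neighbourhoods from Proposition~\ref{prp:1connectionradius} using Proposition~\ref{prp:i-closedadditive}, identify a ``stable'' level via pigeonhole, and recognise the stable neighbourhoods as the closed classes of the partition through Lemma~\ref{lma:bridge}. Set $k = \lceil 4/\gamma \rceil + 2$. By Proposition~\ref{prp:1connectionradius}, $|\widetilde{N}_{1,\eta_1}(v)| \ge (1/4+\gamma)n$ for every $v$ with $\eta_1 = \gamma^2/12$. Iteratively applying Proposition~\ref{prp:i-closedadditive} produces constants $\eta_1 \ge \eta_2 \ge \cdots \ge \eta_k > 0$ such that $\widetilde{N}_{j,\eta_j}(v) \subseteq \widetilde{N}_{j+1,\eta_{j+1}}(v)$ for $1 \le j < k$; abbreviate $\widetilde{N}_j(v) = \widetilde{N}_{j,\eta_j}(v)$. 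Since this nested chain of subsets of $V$ grows by a total of at most $(3/4-\gamma)n$ across $k-1 \ge 4/\gamma + 1$ steps, pigeonhole furnishes for each $v$ an index $j(v) \in [1,k-1]$ with $|\widetilde{N}_{j(v)+1}(v) \setminus \widetilde{N}_{j(v)}(v)| < \gamma n/4$. Define $W(v) := \widetilde{N}_{j(v)}(v)$ and $W^+(v) := \widetilde{N}_{j(v)+1}(v)$, so $|W(v)| \ge (1/4+\gamma)n$ and $|W^+(v)| \le |W(v)| + \gamma n/4$.

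The crux is to show that each $W(v)$ is $(k,\eta)$-closed for some $\eta>0$ depending only on $\gamma$. The key observation is the contrapositive of stability: for any $x \in V \setminus W^+(v)$ one must have $|\widetilde{N}_1(x) \cap W(v)| < \varepsilon n$, for otherwise Lemma~\ref{lma:bridge} applied with $X = \widetilde{N}_1(x)$ and $Y = W(v) = \widetilde{N}_{j(v)}(v)$ would certify that $x$ is $(j(v)+1,\eta')$-close to $v$, contradicting $x \notin W^+(v)$. Consequently, for any $u \in W(v)$, at least $(1/4+\gamma-\varepsilon)n$ of $\widetilde{N}_1(u)$ lies inside $W^+(v)$. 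Split into two regimes. In the first, $|W^+(v)| < (1/2+\gamma)n$: for $u_1, u_2 \in W(v)$, inclusion--exclusion inside $W^+(v)$ yields $|\widetilde{N}_1(u_1) \cap \widetilde{N}_1(u_2)| \ge 2(1/4+\gamma-\varepsilon)n - |W^+(v)| \ge 3\gamma n/4 - 2\varepsilon n$, and Lemma~\ref{lma:bridge} gives $(2,\eta)$-closeness, which by Proposition~\ref{prp:i-closedadditive} upgrades to $(k,\eta)$-closeness. In the second regime, $|W^+(v)| \ge (1/2+\gamma)n$, so the same contrapositive combined with the minimum codegree condition forces every vertex of $V$ to be $(k,\eta)$-close to $v$, and the one-class case applies.

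With each $W(v)$ now $(k,\eta)$-closed, the ``furthermore'' part of Lemma~\ref{lma:bridge} merges any two $W(v), W(w)$ that share at least $\varepsilon n$ vertices, so the resulting closed sets are pairwise essentially disjoint. Since each such set has size at least $(1/4+\gamma)n > n/4$, at most three of them arise. Vertices that lie outside every selected $W(v)$ (a boundary of total size at most $\gamma n/4$, absorbed from the stability slack) are attached to an adjacent class using one more bridge application, which preserves closure and produces the slightly weaker size bound $(1/4+3\gamma/4)n$ stated in the lemma. The main obstacle is the crux step: converting the qualitative stability bound $|\widetilde{N}_{j(v)+1}(v) \setminus \widetilde{N}_{j(v)}(v)| < \gamma n/4$ into a quantitative large-intersection statement for the 1-neighbourhoods of $u_1, u_2 \in W(v)$ requires handling both regimes of $|W^+(v)|$ uniformly in $v$, and the successive constants $\eta$ generated by Lemma~\ref{lma:bridge} and Proposition~\ref{prp:i-closedadditive} must be chosen at the outset so that they compose into a single $\eta$ that works for the final closure.
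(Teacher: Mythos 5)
Your overall architecture (grow the close-neighbourhoods of a vertex, find a level where growth stalls, use Lemma~\ref{lma:bridge} to certify closedness, then recurse on the complement) is in the same spirit as the paper's proof, but the step you yourself flag as the crux contains a genuine gap. From the contrapositive ``$x \notin W^+(v)$ implies $|\widetilde{N}_1(x) \cap W(v)| < \varepsilon n$'' you conclude that \emph{every} $u \in W(v)$ has at least $(1/4+\gamma-\varepsilon)n$ of $\widetilde{N}_1(u)$ inside $W^+(v)$. This does not follow. Since $(1,\eta_1)$-closeness is symmetric, what the contrapositive gives is
\begin{align*}
\sum_{u \in W(v)} |\widetilde{N}_1(u) \setminus W^+(v)| \;=\; \sum_{x \notin W^+(v)} |\widetilde{N}_1(x) \cap W(v)| \;<\; \varepsilon n^2,
\end{align*}
i.e.\ control of the \emph{average} of $|\widetilde{N}_1(u)\setminus W^+(v)|$ over $u\in W(v)$; an individual $u \in W(v)$ may still have almost all of $\widetilde{N}_1(u)$ outside $W^+(v)$, and for such $u$ your inclusion--exclusion step collapses. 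The paper's proof is built precisely around this obstacle: it does not claim that all of $\widetilde{N}_1(v)$ is good, but defines $U$ to be the subset of $u \in \widetilde{N}_1(v)$ with $|\widetilde{N}_1(u)\cap\widetilde{N}_2(v)| \ge (1/4+\gamma/3)n$, uses the averaging identity above to show $|U| \ge (1+3\gamma)n/4$, proves that $U$ (not $\widetilde{N}_1(v)$) is $2$-closed, and then grows $U$ greedily. To repair your argument you would have to make the same move, and the classes of the partition would then be these good subsets rather than the $W(v)$ themselves.

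The second regime is also not handled. When $|W^+(v)| \ge (1/2+\gamma)n$ the intersection argument for two vertices of $W(v)$ fails (two subsets of $W^+(v)$ of size about $n/4$ need not meet), and your claim that the codegree condition then forces every vertex of $V$ to be $(k,\eta)$-close to $v$ is a non sequitur: closeness is not transitive, and a vertex $x \notin W^+(v)$ with $|\widetilde{N}_1(x)\cap W(v)| < \varepsilon n$ can still satisfy $|\widetilde{N}_1(x)| \ge (1/4+\gamma)n$ entirely inside $V \setminus W(v)$, which has room for it. The paper avoids this by making its large-neighbourhood case a statement about \emph{all} vertices simultaneously ($|\widetilde{N}_2(v)| \ge (1+\gamma)n/2$ for every $v$), which lets any two such sets intersect pairwise in $\ge \gamma n$ vertices; a hypothesis about a single $v$ does not give that leverage. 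By contrast, your pigeonhole set-up, the recursion on $V \setminus W^+(v)$ using $|\widetilde{N}_1(x)\setminus W^+(v)| \ge (1/4+\gamma/2)n$, and the count of at most three classes are all sound and close to what the paper does.
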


\begin{proof}
Throughout this proof, $\eta_1$, \dots, $\eta_{\lceil 4/ \gamma \rceil + 2}$ is assumed to be a decreasing sequence of strictly positive sufficiently small constants.
We write $i$-close to mean $(i ,\eta_i)$-close and recall that $\widetilde{N}_i(x)$ is the set of vertices $y$ that are $i$-close to $x$.
If $|\widetilde{N}_2(v)| \ge (1+\gamma)n/2$ for all $v \in V$, then $|\widetilde{N}_2(v) \cap \widetilde{N}_2(u)| \ge \gamma n$ for $u, v \in V$.
Thus, $H$ is 4-closed by Lemma~\ref{lma:bridge}.
Hence, we may assume that there exists a vertex~$v$ such that $|\widetilde{N}_2(v)| < (1+\gamma)n/2$.
Let $U$ be the set of vertices $u \in \widetilde{N}_1(v)$ such that 
\begin{align*}
|\widetilde{N}_1(u) \cap  \widetilde{N}_{2} (v)| \ge(1/4+ \gamma/3)n.
\end{align*}

\begin{clm}
The size of $U$ is at least $(1 +3\gamma)n/4$ and $U$ is 2-closed in~$H$.
\end{clm}

\begin{proof}[Proof of claim]
Note that if $|\widetilde{N}_{1}(w) \cap  \widetilde{N}_1 (v)| \ge \gamma^2n/6$ for $w \in V \setminus v$, then $w \in \widetilde{N}_{2} (v)$ by Lemma~\ref{lma:bridge}.
Thus, for each $w \notin \widetilde{N}_{2}(v)$, 
\begin{align}
|\widetilde{N}_1(v) \cap \widetilde{N}_1 (w)| < \gamma^2n/6. \nonumber
\end{align}
Therefore, by summing over all $w \notin  \widetilde{N}_{2}(v)$, we have
\begin{align}
 \sum_{u \in \widetilde{N}_1(v)} |\widetilde{N}_1(u) \setminus \widetilde{N}_{2}(v)| & = 
\sum_{w  \notin \widetilde{N}_{2}(v)} |\widetilde{N}_1(v) \cap \widetilde{N}_1 (w)| 
< \gamma^2 n^2 /6.
\label{eqn:sumw}
\end{align}
Since $|\widetilde{N}_1(u')| \ge (1/4+\gamma) n$ for $u' \in V$ by Proposition~\ref{prp:1connectionradius}, for $u' \in \widetilde{N}_1(v) \setminus U$
\begin{align*}
|\widetilde{N}_1(u') \setminus \widetilde{N}_{2}(v) | =  |\widetilde{N}_1(u')| - |\widetilde{N}_1(u') \cap \widetilde{N}_{2}(v)| > {2\gamma n}/3.
\end{align*}
Therefore, by summing over $u' \in \widetilde{N}_1(v) \setminus U$ and \eqref{eqn:sumw}, we have
\begin{align*}
	{2\gamma n}  |\widetilde{N}_1(v) \setminus U| /3 \le & \sum_{u' \in \widetilde{N}_1(v) \setminus U} |\widetilde{N}_1(u') \setminus \widetilde{N}_{2}(v)| \le \sum_{u \in \widetilde{N}_1(v)} |\widetilde{N}_1(u) \setminus \widetilde{N}_{2}(v)| < {\gamma^2 n^2}/{6}.
\end{align*}
Again recall Proposition~\ref{prp:1connectionradius} that $|\widetilde{N}_1(v)| \ge (1/4 + \gamma)n$, so $|U| \ge (1 +3\gamma)n/4$ as desired.
Furthermore, for $u, u' \in U$, we have 
\begin{align}
|\widetilde{N}_1(u) \cap  \widetilde{N}_1 (u')| \ge  |\widetilde{N}_1(u) \cap \widetilde{N}_{2}(v)| + |\widetilde{N}_1(u') \cap \widetilde{N}_{2}(v)|- |\widetilde{N}_{2}(v)|  
\ge \gamma n /6 \nonumber
\end{align}
as $|\widetilde{N}_{2}(v)| < (1+\gamma)n/2$.
Hence, $u$ and $u'$ are $2$-close to each other by Lemma~\ref{lma:bridge}.
\end{proof}

Set $U_0 = U$.
For an integer $i \ge 1$, we define $U_{i}$ to be the set of vertices $u' \notin  W_{i-1}$ such that $|\widetilde{N}_1(u') \cap W_{i-1}| \ge \gamma n/4 $, where $W_{j'}$ is the set $\bigcup_{j=0}^{j'} U_j$.
By Lemma~\ref{lma:bridge} and an induction on $i$, we deduce that $H[W_i]$ is $(i+2)$-closed in~$H$.
Let $i_0$ be the smallest integer such that $|U_{i_0}| < \gamma n/4$.
Since $U_0, U_1, \dots$ are disjoint sets, $1 \le i_0 \le \lceil 4 / \gamma\rceil$.
If $W_{i_0} = V(H)$, then $H$ is $(i_0+2)$-closed and so $H$ is $(\lceil 4 / \gamma\rceil+2)$-closed by Proposition~\ref{prp:i-closedadditive}.
Thus, we may assume that $V(H) \ne W_{i_0}$.
Note that $|W_{i_0}| \ge |U| \ge (1+ 3\gamma)n/4$.
For every $w \notin W_{i_0}$,  we have 
\begin{align*}
	|\widetilde{N}_{1}(w) \setminus W_{i_0}| & \ge |\widetilde{N}_{1}(w)| - 	\left|\widetilde{N}_{1}(w) \cap W_{i_0-1} \right| - |U_{i_0}| \\
& \ge  (1/4+\gamma)n - \gamma n/4 -  \gamma n/4
	=  (1/4 + \gamma/2 )n.
\end{align*}
Let $V' = V \setminus W_{i_0}$.
Note that $|V'| \le 3n/4$ and $|\widetilde{N}_1(u) \cap V'| \ge (1/4 + \gamma/2 )n$ for all $u \in V'$.
Thus, we are done by repeating the whole argument at most twice by replacing $V$ with $V'$.
\end{proof}

To prove Lemma~\ref{lma:(i,eta)-close}, it is sufficient to consider the case when there are two or three partition classes satisfying the conditions in Lemma~\ref{lma:connectioncomponent}.
Recall that an $(X,Y)$-bridge of length~$i$ is a triple $(x,y,S)$ such that $x \in  X$, $y \in Y$ and $S$ is an $(x,y)$-connector of length $i$.
To prove Lemma~\ref{lma:(i,eta)-close}, it is enough by Lemma~\ref{lma:bridge} to show that there are at least $\varepsilon n^{4i+1}$ $(X,Y)$-bridges of length~$i$ for some $\varepsilon >0$, where $X$ and $Y$ are the partition classes satisfying the conditions in Lemma~\ref{lma:connectioncomponent}.

We need the lemma below. 
Recall that $L(e)$ is the set of vertices $v$ such that $V(e) \cup v$ spans a $K_4^-$ in $H$ and $|L(e)| \ge (1/4+\gamma)n$ by Proposition~\ref{prp:edgeextension}.

\begin{lma} \label{lma:XYbridge}
Let $\gamma, c_1, c_2,c_3, c_4, \varepsilon_1, \varepsilon'_2, \varepsilon_2, \varepsilon_3,\varepsilon_3' \varepsilon_4 >0$ be constants such that 
\begin{align*}
\varepsilon_1 &<  \min\{ \varepsilon_2, \varepsilon_3 \}, & 	
c_1 +\varepsilon_2 & < c_2 < c_3 \varepsilon_3', \\
	\max\{ 2 \varepsilon_1 + \varepsilon_3', 4\varepsilon'_2 \}& < 3 \gamma, & 
	2c_1 & < c_3 <  \min\{ c_4\varepsilon_4/2 - \varepsilon_3 \}.
\end{align*}
Let $n$ be a sufficiently large integer and let $H$ be a $3$-graph of order $n$ with $\delta_2(H) \ge (1/2+\gamma) n$.
Suppose that $V(H)$ is partitioned into $X$ and $Y$ with $n/4 \le |X| \le n/2 \le |Y| $.
Furthermore, at least one of the following conditions holds:
\begin{itemize}
	\item[(i)] there are $c_1 n^{3}$ edges $e$ such that $|L(e) \cap X| \ge \varepsilon_1 n$ and $|L(e) \cap Y| \ge \varepsilon_1 n$,
	\item[(ii)] there are $c_2 n^{4}$ copies $T$ of $K_4$ such that $|T \cap X | = 2 = |T \cap Y|$,
	\item[(iii)] there are $c_3 n^{3}$ edges $xyy'$ of type $XYY$ such that $|L(xyy')\cap X | \ge \varepsilon_3 n$,
	\item[(iv)] there are $c_4 n^{3}$ edges $xx'y$ of type $XXY$ such that $|L(xx'y)\cap Y | \ge \varepsilon_4 n$.
\end{itemize}
Then, there exists $\varepsilon \ge 0$ such that the number of $(X,Y)$-bridges of length $1$ is at least $\varepsilon n^{5}$.
\end{lma}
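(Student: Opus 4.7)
The plan is to treat each of the four hypotheses (i)--(iv) separately and, in each case, construct at least $\varepsilon n^5$ length-$1$ $(X,Y)$-bridges with $\varepsilon>0$ built from the given constants. Case (i) is almost immediate: for each of the $\ge c_1 n^3$ edges $e$ with $|L(e)\cap X|,|L(e)\cap Y|\ge\varepsilon_1 n$, every ordered pair $(x,y)\in(L(e)\cap X\setminus V(e))\times(L(e)\cap Y\setminus V(e))$ yields a bridge $(x,y,V(e))$ by the very definition of $L$, and since distinct edges give distinct $S=V(e)$, summing produces at least $c_1\varepsilon_1^2 n^5/2$ bridges.

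For case (iii), each qualifying edge $e=xyy'$ and each $x''\in L(e)\cap X\setminus\{x\}$ produces a $K_4^-$-copy on $T=\{x,x'',y,y'\}$ of type $XXYY$. I convert this into a bridge $(x_0,y_0,S)$ by selecting $x_0\in\{x,x''\}$, $S=T\setminus x_0$, and extending by some $y_0\in Y\setminus T$ with $S\cup y_0$ spanning $K_4^-$. A short case analysis on which of the four triples of $T$ is the non-edge (if any) shows that at least one $XYY$-face of $T$ is an edge, so Proposition~\ref{prp:edgeextension} applies and gives $\ge(1/4+3\gamma/2)n$ candidate extensions. The quantitative constraints (notably $2\varepsilon_1+\varepsilon_3'<3\gamma$ and $c_3\varepsilon_3'>c_2$) are then used to argue that either enough candidates land in $Y$ directly, or that the edges in question already satisfy the hypothesis of case (i), finishing in either branch. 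Case (iv) is handled symmetrically, with the roles of $X$ and $Y$ swapped and the starting edges now of type $XXY$ with large $L(\cdot)\cap Y$.

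For case (ii), each $K_4$-copy $K$ of type $2$-$2$ contributes two $XYY$-faces, each witnessed in its $L$-set by the remaining $X$-vertex of $K$. Double-counting (face, witness)-pairs over the $c_2 n^4$ copies yields $2c_2 n^4$ witnesses spread over at most $|X|\binom{|Y|}{2}$ $XYY$-edges; then either $\ge c_3 n^3$ of these edges individually accumulate $\ge\varepsilon_3 n$ witnesses (reducing to case (iii)), or the witnesses concentrate enough that a secondary averaging using the relation $c_1+\varepsilon_2<c_2$ puts us back into case (i).

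The main obstacle is the color-class constraint in the extension step of cases (ii)--(iv): the naive intersection bound $|L(e_1)\cap L(e_2)|\ge 2(1/4+3\gamma/2)n-n=(3\gamma-1/2)n$ is already negative for small $\gamma$, so one cannot simply intersect two codegree extensions of distinct faces to produce a common vertex in $Y$. My workaround is to extend only along a single, well-chosen face of each $K_4^-$-copy and to rely on the sheer abundance of copies (or the dichotomy back to case (i)) guaranteed by the hypotheses. The intricate list of arithmetic inequalities in the lemma statement is exactly what calibrates these trade-offs and makes the dichotomies close.
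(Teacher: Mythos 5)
Your case (i) matches the paper's and is fine, and you have correctly isolated the central difficulty: one cannot intersect the codegree-extension sets of two different faces of a $4$-set, since the bound on $|L(e_1)\cap L(e_2)|$ is vacuous for small $\gamma$. But the workaround you announce for this difficulty is never actually supplied, and this is exactly where the paper's proof has its one substantive idea. In your case (iii), after discarding (via case (i)) the edges $xyy'$ with $|L(xyy')\cap Y|\ge\varepsilon_1 n$, you need a vertex $y_0\in Y$ extending the face $S=T\setminus x_0$ of your $K_4^-$-copy $T=\{x,x'',y,y'\}$; the only $XYY$-face of $T$ you know to be an edge is $xyy'$ itself, and its $L$-set meets $Y$ in fewer than $\varepsilon_1 n$ vertices by the very assumption you just made, so ``enough candidates land in $Y$ directly'' is precisely what fails. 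The paper instead upgrades each surviving edge $xyy'$ to a \emph{complete} $K_4$ of type $XXYY$ (showing $|N(x,y)\cap N(x,y')\cap N(y,y')\cap X|\ge\varepsilon_3' n$ by a codegree count), lands in case (ii), and there proves the key claim (Claim~\ref{clm:K4}): for a complete $K_4$ on $\{x,x',y,y'\}$ of type $XXYY$, at least one of $L(xx'y)\cap X$, $L(xx'y')\cap X$, $L(xyy')\cap Y$, $L(x'yy')\cap Y$ has size at least $\varepsilon_2' n$. That claim is proved by summing codegree inequalities over all four faces simultaneously to contradict $\delta_2(H)\ge(1/2+\gamma)n$; it is the mechanism that replaces the failed pairwise intersection, and nothing equivalent appears in your proposal. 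Note also that this argument genuinely needs all four faces to be edges, so producing only $K_4^-$-copies of type $XXYY$, as your case (iii) does, would not feed into it even if you had it.

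Two further problems. Your cases (ii) and (iii) reduce to one another ((ii) ``reducing to case (iii)'', while (iii) manufactures $2$-$2$ copies of $K_4^{(-)}$), which is circular unless one of the two is resolved outright; in the paper the reduction chain is (iv) $\rightarrow$ (iii) $\rightarrow$ (ii), with (ii) the terminal case settled by the claim above. And the dichotomy inside your case (ii) does not close numerically: if fewer than $c_3 n^3$ of the $XYY$-faces carry at least $\varepsilon_3 n$ witnesses, the number of (face, witness) pairs can still be as large as $(c_3+\varepsilon_3)n^4$, which is no contradiction to having $2c_2 n^4$ such pairs because the hypotheses force $c_2<c_3\varepsilon_3'\ll c_3$. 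Moreover, all the witnesses you count lie in $X$, so no averaging over them can certify the condition $|L(e)\cap Y|\ge\varepsilon_1 n$ required to return to case (i); the paper's case (ii) for this reason works with the $XXY$-faces (whose witnesses lie in $Y$) and still needs the claim to finish.
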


\begin{proof}
Write $\delta = \delta_2(H)$.
We consider each condition one by one.

(i) There exist $c_1 n^{3}$ edges $e$ such that $|L(e) \cap X| \ge \varepsilon_1 n$ and $|L(e) \cap Y| \ge \varepsilon_1 n$.
For each such edge $e$, $(x,y,V(e))$ is an $(X,Y)$-bridge for $x \in L(e) \cap X$ and $y \in L(e) \cap Y$.
Therefore, there are at least $c_1 \varepsilon^2_1 n^{5}$ $(X,Y)$-bridges of length~$1$.

(ii) There exist $c_2 n^{4}$ copies $T$ of $K_4$ such that $|T \cap X | = 2 = |T \cap Y|$.
There are at least $(c_2-\varepsilon_2) n^3$ edges $e$ of type $XXY$ contained in at least $\varepsilon_2 n $ copies of these $K_4$.
Otherwise, the number of these $K_4$ is at most 
\begin{align}
(c_2-\varepsilon_2 ) n^3 \times n + (1-c_2+ \varepsilon_2 ) n^3 \times \varepsilon_2 n < c_2 n^4, \nonumber
\end{align}
a contradiction.
Note that for each such edge $e$, $|L(e) \cap Y| \ge \varepsilon_2 n$.
By~(i), we may assume that there are at least $(c_2-\varepsilon_2-c_1) n^3$ edges $e$ of type $XXY$ contained in at least $\varepsilon_2 n $ copies of these $K_4$ with $|L(e) \cap X| \le \varepsilon_1 n $.
Fix one such edge $xx'y$ and let $y' \in Y$ such that $H[\{x,x',y,y'\}]$ is a $K_4$.
Note that there are $(c_2-\varepsilon_2-c_1) \varepsilon_2 n^4/2$ choices for $x$, $x'$, $y$ and~$y'$.

\begin{clm} \label{clm:K4}
One of $L(xx'y) \cap X$, $L(xx'y') \cap X$, $L(xyy') \cap Y$, $L(x'yy') \cap Y$ is of size at least $\varepsilon_2' n$.
\end{clm}

\begin{proof}[Proof of claim]
Suppose that the claim is false.
Note that
\begin{align*}
 2\varepsilon_2' n & \ge 2 |L(xx'y) \cap X|\\
 & \ge |N(x,x') \cap X| + |N(x,y) \cap X| + |N(x'y) \cap X| - |X|\\
 & \ge |N(x,y) \cap X| + |N(x',y) \cap X| - |X|.
\end{align*}
Since $|N(x,y) \cap X| \ge \delta - |N(x,y) \cap Y|$ and $|N(x',y) \cap X| \ge \delta - |N(x',y) \cap Y|$, we have
\begin{align}
	     |N(x,y) \cap Y| +  |N(x',y) \cap Y| & \ge  2 \delta - |X|-2\varepsilon_2' n     . \label{eqn:L(xx'y)}
\end{align}
Similarly,
\begin{align}
|N(x,y') \cap Y| +  |N(x',y') \cap Y| & \ge  2 \delta - |X|-2\varepsilon_2' n   . \label{eqn:L(xx'y')}
\end{align}
In addition, we have
\begin{align}
	2\varepsilon_2' n +|Y| & \ge   |N(x,y) \cap Y| + |N(x,y') \cap Y| + |N(y,y') \cap Y| , \label{eqn:L(xyy')} \\
	2\varepsilon_2' n +|Y| & \ge   |N(x',y) \cap Y| + |N(x',y') \cap Y| + |N(y,y') \cap Y| \label{eqn:L(x'yy')}
\end{align}
as $|L(xyy') \cap Y |$, $|L(x'yy') \cap Y | \le \varepsilon_2' n $ respectively. 
Recall that $|X|+|Y| = n$, $|X| \le |Y|$ and $|N(y,y') \cap Y| \ge \delta -|X|$.
Together with \eqref{eqn:L(xx'y)}, \eqref{eqn:L(xx'y')}, \eqref{eqn:L(xyy')} and \eqref{eqn:L(x'yy')}, we have 
\begin{align}
	6 \delta \le & 4|X| + 2|Y|  + 8\varepsilon_2' n \le 3n + 8\varepsilon_2' n \nonumber 
\end{align}
a contradiction. 
\end{proof}

Recall that there are $(c_2-\varepsilon_2-c_1) \varepsilon_2 n^4/2$ choices of $\{x,x',y,y'\}$.
Suppose that at least $(c_2-\varepsilon_2-c_1) \varepsilon_2 n^4/8$ copies of $K_4 = \{x,x',y,y'\}$ with $|L(xx'y) \cap X| \ge \varepsilon_2' n$.
Let $u \in L(xx'y) \cap X$.
Note that $(u,y',\{x,x',y'\})$ is an $(X,Y)$-bridge.
Thus, the number of $(X,Y)$-bridges (of length 1) is at least $(c_2-\varepsilon_2-c_1) \varepsilon_2 \varepsilon_2' n^5/24$.
Therefore, we may assume without loss of generality that there are at least $(c_2-\varepsilon_2-c_1) \varepsilon_2 n^4/8$ copies of $K_4 = \{x,x',y,y'\}$ with $|L(xyy') \cap Y| \ge \varepsilon_2' n$.
Let $u \in L(xyy') \cap Y$.
Note that $(x',u,\{x,y,y'\})$ is an $(X,Y)$-bridge. 
Again, the number of $(X,Y)$-bridges is at least $(c_2-\varepsilon_2-c_1) \varepsilon_2 \varepsilon_2' n^5/24$.

(iii) There exist $c_3 n^{3}$ edges $xyy'$ of type $XYY$ such that $|L(xyy')\cap X | \ge \varepsilon_3 n$.
By~(i), we may assume that there are at least $c_3 n^3/2$ edges $xyy'$ of type $XYY$ such that $|L(xyy')\cap Y | < \varepsilon_1 n$.
Since $xyy'$ is an edge and $|L(xyy')\cap Y | < \varepsilon_1 n$, we have 
\begin{align}
	|N(x,y) \cap Y | + |N(x,y') \cap Y | + |N(y,y') \cap Y | - | Y | \le 2 | L ( xyy') \cap Y | < 2\varepsilon_1 n. \nonumber
\end{align}
Assume that $|N(x,y) \cap N(xy') \cap N(y,y') \cap X| \le  \varepsilon_3' n$ and so
\begin{align}
	|N(x,y) \cap X| + |N(x,y') \cap X| + |N(y,y') \cap X| - 2|X| \le \varepsilon_3' n. \nonumber 
\end{align}
Since $|X|+|Y| = n$ and $|X| \le n/2 \le |Y|$, (by combining the two inequalities above together) we have
\begin{align}
3 \delta & \le \deg(x,y)+\deg(x',y)+\deg(x,x') < 2 | X | + | Y | + 2 \varepsilon_1 n +\varepsilon_3' n \nonumber \\ 
& \le (3/2 + 2\varepsilon_1  + \varepsilon_3' ) n, \nonumber
\end{align}
a contradiction.
Thus, we have $|N(x,y) \cap N(x,y') \cap N(y,y') \cap X| \ge \varepsilon_3' n$.
Note that for each $u \in N(x,y) \cap N(x,y') \cap N(y,y') \cap X$, the set $\{u,x,y,y'\}$ spans a $K_4$ in $H$.
Thus, there are at least $c_3 \varepsilon_3' n^4/2 \ge c_2 n^4$ copies of $K_4$ with two vertices in each of $X$ and $Y$.
Therefore, we are done by~(ii).

(iv) There exist $c_4 n^{3}$ edges $xx'y$ of type $XXY$ such that $|L(xx'y)\cap Y | \ge \varepsilon_4 n$.
Hence, there are at least $ c_4 \varepsilon_4 n^4/2$ copies of $K_4^-$ of type $XXYY$.
Since every $K_4^-$ of type $XXYY$ contains an edge of type $XYY$, there are at $c_3 n^{3}$ edges $xyy'$ of type $XYY$ such that $|L(xyy')\cap X | \ge \varepsilon_3 n$.
Otherwise, the number of $K_4^-$ of type $XXYY$ is at most
\begin{align}
	c_3 n^3 \times n + n^3 \times \varepsilon_3 n < c_4  \varepsilon_4 n^4/2, \nonumber
\end{align}
a contradiction.
Thus, we are in case~(iii).
\end{proof}

First, we consider the case when Lemma~\ref{lma:connectioncomponent} gives exactly two partition classes as its proof will form the framework for the case when there are three partition classes.

\begin{lma} \label{lma:2parts}
Let $i_X, i_Y >0$ be integers and let $\eta_X, \eta_Y , \gamma>0$ be constants. 
Let $n$ be a sufficiently large integer and let $H$ be a $3$-graph of order $n$ with $\delta_2(H) \ge (1/2+\gamma) n$.
Suppose that $V$ is partitioned into $X$ and $Y$ with $n/4 \le |X| \le n/2 \le |Y| $.
Furthermore, suppose that $X$ and $Y$ are $(i_X, \eta_X)$-closed and $(i_Y, \eta_Y)$-closed in $H$ respectively.
Then $H$ is $(i_0, \eta)$-closed for some integer $i_0 \le 3 \max\{i_X,i_Y\}+1$ and constant $\eta >0$.
\end{lma}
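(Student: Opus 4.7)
The plan is to combine the second assertion of Lemma~\ref{lma:bridge} with Lemma~\ref{lma:XYbridge}. Since $X, Y$ are respectively $(i_X, \eta_X)$- and $(i_Y, \eta_Y)$-closed, it suffices to exhibit $\varepsilon n^5$ copies of $(X,Y)$-bridges of length~$1$ for some constant $\varepsilon>0$: then Lemma~\ref{lma:bridge} immediately yields that $V(H)=X\cup Y$ is $(i_X+i_Y+1,\eta)$-closed for some $\eta>0$, and since $i_X+i_Y+1\le 3\max\{i_X,i_Y\}+1$, the conclusion follows with $i_0=i_X+i_Y+1$. In turn, Lemma~\ref{lma:XYbridge} produces such bridges provided any one of its four conditions (i)-(iv) holds, so the task reduces to verifying at least one of them. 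As a preliminary observation, since $|X|\le n/2$ and $\delta_2(H)\ge(1/2+\gamma)n$, every pair $u,v\in X$ satisfies $|N(u,v)\cap Y|\ge(1/2+\gamma)n-(|X|-2)\ge\gamma n$, so there are at least $\binom{|X|}{2}\gamma n/3 = \Omega(n^3)$ edges of type $XXY$; by Proposition~\ref{prp:edgeextension}, every edge $e$ further satisfies $|L(e)|\ge(1/4+3\gamma/2)n$.

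I would then split the verification into two regimes based on $|X|$. In the asymmetric regime $|X|\le(1/4+3\gamma/2)n$, fix any edge $e=x_1x_2y$ of type $XXY$. The three sets $A_1=N(x_1,x_2)\cap Y$, $A_2=N(x_1,y)\cap Y$ and $A_3=N(x_2,y)\cap Y$ satisfy $\sum_{i}|A_i|\ge 3(1/2+\gamma)n-3(|X|-1)$, which exceeds $|Y|-1$ by $\Omega(n)$ under our hypothesis on $|X|$. A routine double-counting argument (namely, $L(e)\cap Y$ contains every $v\in Y\setminus\{y\}$ belonging to at least two of the $A_i$) then gives $|L(e)\cap Y|=\Omega(n)$, so every edge of type $XXY$ witnesses condition~(iv) with suitable constants.

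In the balanced regime $|X|\in((1/4+3\gamma/2)n,\,n/2]$, this direct double-counting is too weak. Instead, I would apply Corollary~\ref{cor:supersaturation} — valid since $e(H)\ge(1/2+\gamma)\binom{n}{3}>0.3\binom{n}{3}$ — to obtain $c'n^4$ copies of $K_4^-$ in $H$, and argue that $\Omega(n^4)$ of them must have type $XXYY$. Each $K_4^-$ of type $XXYY$ contains an edge of type $XXY$ whose missing-vertex completion lies in $Y$ and an edge of type $XYY$ whose missing-vertex completion lies in $X$; pigeonholing these completions yields either $\Omega(n^3)$ edges of type $XXY$ with $|L(e)\cap Y|\ge\varepsilon_4 n$, witnessing~(iv), or $\Omega(n^3)$ edges of type $XYY$ with $|L(e)\cap X|\ge\varepsilon_3 n$, witnessing~(iii).

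The main obstacle is the balanced regime: the extremal construction $H_{n/2,n/2}$ of Section~\ref{sec:lowerbound} has $\delta_2=n/2-1$ and contains no $K_4^-$ of type $XXYY$, so the $\gamma$-slack between $n/2-1$ and $(1/2+\gamma)n$ must be used in a genuinely nontrivial way to force $\Omega(n^4)$ copies of $XXYY$-type $K_4^-$, and not merely a bounded number. All the constants $c_i,\varepsilon_i,\varepsilon_i'$ of Lemma~\ref{lma:XYbridge} must moreover be chosen in the hierarchy prescribed there, which makes balancing $\gamma$ against these parameters the delicate technical step.
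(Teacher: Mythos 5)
There is a genuine gap, and it sits exactly where you flag ``the main obstacle'': the claim that in the balanced regime $\Omega(n^4)$ of the copies of $K_4^-$ supplied by Corollary~\ref{cor:supersaturation} must have type $XXYY$. Nothing forces this: all of those copies could have types $XXXX$, $XXXY$, $XYYY$ or $YYYY$ (compare $H_{n/2,n/2}$ from Section~\ref{sec:lowerbound}, where every $K_4^-$ has type $AAAB$ or $BBBB$; the $\gamma$-slack in the codegree does not obviously destroy this structure when, say, $H[X]$ is very dense). Consequently your overall strategy --- always produce $\varepsilon n^{5}$ $(X,Y)$-bridges of length $1$ via Lemma~\ref{lma:XYbridge} and conclude with $i_0=i_X+i_Y+1$ --- cannot work as stated; the very form of the bound $i_0\le 3\max\{i_X,i_Y\}+1$ already hints that bridges of length about $3i_X$ have to appear. (Your asymmetric-regime computation, giving condition (iv) when $|X|$ is somewhat below $(1/4+3\gamma/2)n$, is essentially correct but does not touch the hard case.)

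The missing idea is a dichotomy on whether $X$ or $Y$ is internally supersaturated. Assuming all of (i)--(iv) fail, the paper first deduces from $e(XXY),e(XYY)\ge\gamma n^3/32$, from $|L(e)|\ge(1/4+\gamma)n$, and from the failure of (i), (iii) and (iv), that there are $\Omega(n^4)$ copies of $K_4^-$ of type $XXXY$ and of type $XYYY$. If $H[X]$ contains $c'n^4$ copies of $K_4^-$ of type $XXXX$, one links such a copy $\{x_1,x_2,x_3,x_4\}$ to a type-$XXXY$ copy $\{x_1',x_2',x_3',y'\}$ by three pairwise disjoint $(x_j,x_j')$-connectors of length $i_X$ inside the graph (available because $X$ is $(i_X,\eta_X)$-closed), producing $\Omega(n^{12i_X+5})$ bridge triples $(x_4,y',S)$ of length $3i_X+1$; no length-$1$ bridges are claimed in this case, and symmetrically for $Y$. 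Only when $e(H[X])\le 0.3\binom{|X|}{3}$ and $e(H[Y])\le 0.3\binom{|Y|}{3}$ does the cross-pair count of $\sum a(x,x',y,y')$ force $\Omega(n^4)$ type-$XXYY$ $4$-sets spanning a $K_4^-$ and hence condition (iii), yielding a contradiction; this is the rigorous version of your balanced-regime pigeonhole, and it genuinely needs the hypothesis that neither side is internally supersaturated. To repair your proposal you must add this $XXXX$/$YYYY$ case split and the longer-bridge construction it entails.
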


\begin{proof}
Write $\delta = \delta_2(H)$.
Let $c_1,c_2,c_3,c_4,\varepsilon_1, \varepsilon_2, \varepsilon'_2, \varepsilon_3,\varepsilon'_3, \varepsilon_4, \varepsilon_5, \varepsilon'_5 >0$ be sufficiently small constants satisfying the following six inequalities:
\begin{align*}
	\varepsilon_1 & <  \min\{ \varepsilon_2, \varepsilon_3 \}, &
	c_1 +\varepsilon_2  < c_2  & < c_3 \varepsilon_3',  & \\ 
	\max\{4\varepsilon'_2, 2 \varepsilon_1 +\varepsilon'_3 \} & < 3 \gamma, & 
	2c_1  < c_3 & <  \min\{ c_4\varepsilon_4/2 - \varepsilon_3, 2^{-11} \varepsilon'_5 -\varepsilon_3\}, &\\
	\varepsilon_5  &\le  \gamma /384, &
	\varepsilon'_5 & <  1/10.
\end{align*}
Hence, they also satisfy the hypothesis of Lemma~\ref{lma:XYbridge}.
In addition, throughout this proof, $\eta_1$, $\eta_2$, \dots is assumed to be a decreasing sequence of strictly positive sufficiently small constants.
Recall that an $(X,Y)$-bridge of length~$i$ is a triple $(x,y,S)$ such that $x \in X$, $y \in Y$ and $S$ is an $(x,y)$-connector of length $i$.
By Lemma~\ref{lma:bridge}, to prove the lemma it is enough to show that there are at least $\varepsilon n^{4i+1}$ $(X,Y)$-bridges of length~$i$ for some $i, \varepsilon >0$.
We may further assume that none of conditions (i)--(iv) in Lemma~\ref{lma:XYbridge} holds, otherwise we are done.
Recall that $n/4 \le |X| \le n/2 \le |Y|$.
For every pair of vertices $x,x' \in X$, $|N(x,x') \cap Y| \ge \delta - |X| \ge \gamma n$ and so $e(XXY) \ge \binom{|X|}{2} ( \delta - |X|) \ge \gamma n^3/32$, where we recall that $e(V_1V_2V_3)$ is the number of edges of type $V_1V_2V_3$. 
Similarly, $e(XYY) \ge |X||Y| (\delta - |X|)/2 \ge \gamma n^3/32$ as $|N(x,y) \cap Y| \ge \delta - |X| \ge \gamma n$ for $x \in X$ and $y \in Y$.
In summary,
\begin{align*}
e(XXY) , e(XYY) & \ge  \gamma n^3/32.
\end{align*}
Further recall Proposition~\ref{prp:edgeextension} that $|L(e)| \ge (1/4+\gamma)n$ for all edges~$e$.
Since neither condition (i) nor (iv) in Lemma~\ref{lma:XYbridge} holds and $e(XXY) \ge \gamma n^3/32$, there are at least $ \gamma n^4/384 \ge \varepsilon_5 n^4$ copies of $K_4^-$ of type $XXXY$.
Similarly, there are at least $ \gamma n^4/384 \ge \varepsilon_5 n^4$ copies of $K_4^-$of type $XYYY$ as neither condition (i) nor (iii) in Lemma~\ref{lma:XYbridge} holds and $e(XYY) \ge \gamma n^3/32$.
Next, we split the argument into cases depending on the number of $K^-_4$ of types $XXXX$ and $YYYY$.

(a) There are $c' n^4$ copies of $K_4^-$ of type $XXXX$, where $c'$ is the constant defined in Corollary~\ref{cor:supersaturation}.
Let $m_X= 4i_X-1$ and $m_Y = 4i_Y-1$.
Recall that there are at least $\varepsilon_5 n^4$ copies of $K_4^-$ of type $XXXY$.
Pick two vertex-disjoint $K_4^-$, $T =\{x_1,x_2,x_3,x_4\}$ of type $XXXX$ and $T' = \{x'_1,x'_2,x'_3,y'\}$ of type $XXXY$.
Since $x_1$ is $(i_X, \eta_X)$-close to $x'_1$, there exist at least 
\begin{align}
\eta_X n^{m_X} - 8n^{m_X-1} \ge \eta_X n^{m_X}/2 \nonumber
\end{align}
copies of $(x_1,x'_1)$-connectors $S_1$ with $S_1 \cap (V(T) \cup V(T')) = \emptyset$.
Fix one such $S_1$.
Similarly, for $i = 2,3$ we can find an $(x_i,x'_i)$-connector $S_i$ such that $S_i \cap (V(T) \cup V(T') \cup S_1) = \emptyset$ and $S_2 \cap S_3 = \emptyset$.
Furthermore, there are at least $(\eta_X n^{m_X}/2)^2$ choices for the pair $(S_2,S_3)$.
Set 
\begin{align*}
S = S_1 \cup S_2 \cup S_3 \cup \{x_1,x_2,x_3,x'_1,x'_2,x'_3\}.
\end{align*}
Note that there is a $K_4^-$-factor in $H[S \cup y']$ as there is a $K_4^-$-factor in each of $H[T]$ and $H[x'_i \cup S_i]$ for $i = 1,2,3$.
Also, there is a $K_4^-$-factor in $H[S \cup x_4]$.
Thus, $(x_4,y',S)$ is an $(X,Y)$-bridge of length $3i_X+1$.
Moreover, there are $\varepsilon_5 c' \eta_X^3 n^{3m_X +8}/(32(3m_X +8)!)$ such $(X,Y)$-bridges.

(b) There are $c' n^4$ copies of $K_4^-$ of type $YYYY$.
We are done by an argument similar to the one used in (a).

(c) Neither (a) nor (b) holds.
By Corollary~\ref{cor:supersaturation}, we have $e(H[X]) \le 0.3 \binom{|X|}{3}$ and $e(H[Y]) \le 0.3 \binom{|Y|}{3}$.
Thus, 
\begin{align}
	e(XXY) \ge (\delta - 0.3 |X|) \binom{|X|}{2} \textrm{ and } e(XYY) \ge (\delta - 0.3 |Y|) \binom{|Y|}{2}. \nonumber
\end{align}
For $x,x' \in X$ and $y,y' \in Y$, define $a(x,x',y,y')$ to be the number of edges in $H[\{x,x',y,y'\}]$.
Note that if $a(x,x',y,y') \ge 3$, then $H[\{x,x',y,y'\}]$ contains a~$K_4^-$.
We sum $a(x,x',y,y')$ over all $x,x' \in X$ and $y,y' \in Y$, so each edge of type $XXY$ (and $XYY$)  is counted $|Y|-1$ (and $|X|-1$) times, i.e.
\begin{align}
 \sum a(x,x',y,y') & =  (|Y|-1) e(XXY) + (|X|-1) e(XYY) \nonumber \\
	& \ge  \frac12 (|X|-1)(|Y|-1) (\delta (|X|+|Y|) -0.3(|X|^2 + |Y|^2)) \nonumber \\
	& =  \frac12 (|X|-1)(|Y|-1) (\delta n -0.3(|X|^2 + |Y|^2)). \label{eqn:sum_a(x,x',y,y')}
\end{align}
If $\sum a(x,x',y,y') > (2 +4 \varepsilon'_5) \binom{|X|}{2} \binom{|Y|}{2}$, then there are at least $\varepsilon'_5\binom{|X|}{2} \binom{|Y|}{2} \ge 2^{-10}\varepsilon'_5 n^4 $ copies of 4-sets $\{x,x',y,y'\}$ such that $e(H[\{x,x',y,y'\}]) = a(x,x',y,y') \ge 3$ as $|X|,|Y| \ge n/4$.
Note that $H[\{x,x',y,y'\}]$ contains a $K_4^-$.
By an averaging argument there are at least $(2^{-11}\varepsilon'_5 -\varepsilon_3)n^3 \ge c_3 n^3$ edges $e$ of type $XYY$ with $|L(e) \cap X| \ge \varepsilon_3 n$.
This implies that condition (iii) in Lemma~\ref{lma:XYbridge} holds, a contradiction. 
Thus, we may assume that $\sum a(x,x',y,y') \le (2 +4 \varepsilon'_5) \binom{|X|}{2} \binom{|Y|}{2}$.
Recall that $n/4 \le |X| = n - |Y|$ and $\delta \ge n/2$.
Therefore, \eqref{eqn:sum_a(x,x',y,y')} becomes
\begin{align}
	(2 +4\varepsilon'_5 ) \binom{|X|}{2} \binom{|Y|}{2} & \ge   \frac12 (|X|-1)(|Y|-1) (\delta n -0.3(|X|^2 + |Y|^2)), \nonumber \\
		(1 +2 \varepsilon'_5 )  |X||Y| & \ge   \delta n - 0.3 ( |X|^2 + |Y|^2),\nonumber \\
		\varepsilon'_5   n^2& \ge   n^2/2 - 0.3 ( |X|^2 + |Y|^2) - |X||Y| \nonumber \\
	& =    n^2/10 + 0.4 ( |X| -n/2)^2 \ge n^2/10, \nonumber
\end{align}
a contradiction.
This completes the proof of Lemma~\ref{lma:2parts}.
\end{proof}

We now consider the case when $V(H)$ is partitioned into 3 classes, $X'$, $Y'$ and $Z'$ such that $|X'|,|Y'|,|Z'| \ge (1/4 + \gamma ) n$ and $X'$, $Y'$ and $Z'$ are $( \lceil 4/ \gamma \rceil +2, \eta)$-closed in $H$.
Its proof is based on the proof of Lemma~\ref{lma:2parts}.

\begin{lma} \label{lma:3parts}
Let $\gamma > 0$ and let $H$ be a $3$-graph of order $n$ with $\delta_2(H) \ge (1/2+\gamma) n$.
Suppose that $V(H)$ is partitioned into $X'$, $Y'$ and $Z'$ with $|X'|, |Y'|, |Z'| \ge n/4$ and $X'$, $Y'$ and $Z'$ are $(i_{X'}, \eta_{X'})$-closed, $(i_{Y'}, \eta_{Y'})$-closed and $(i_{Z'}, \eta_{Z'})$-closed in $H$ respectively.
Then $H$ is $(i, \eta)$-closed for some integer $i \ge 1$ and constant $\eta >0$.
\end{lma}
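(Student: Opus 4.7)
The strategy is to reduce Lemma~\ref{lma:3parts} to the two-part case already handled in Lemma~\ref{lma:2parts}. After a possible relabelling of $X', Y', Z'$, I aim to show that the pair $(Y', Z')$ admits at least $\varepsilon n^{4i+1}$ bridges of length $i$ for some constants $\varepsilon > 0$ and integer $i \ge 1$. By Lemma~\ref{lma:bridge} this forces $Y' \cup Z'$ to be $(i + \max\{i_{Y'}, i_{Z'}\}, \eta')$-closed in $H$ for some $\eta' > 0$. Since $|X'|, |Y'|, |Z'| \ge n/4$ yields $|X'| \le n/2 \le |Y' \cup Z'|$, Lemma~\ref{lma:2parts} applied to the bipartition $(X', Y' \cup Z')$ then delivers the required $(i_0, \eta)$-closedness of $H$.

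To produce these bridges, I would argue by contradiction and run the case analysis of Lemma~\ref{lma:XYbridge} for all three pairs $(X',Y')$, $(X',Z')$, $(Y',Z')$ simultaneously. If no pair admits enough bridges, then for every pair of classes none of the four bridge-producing conditions~(i)--(iv) of Lemma~\ref{lma:XYbridge} can hold. As in Lemma~\ref{lma:2parts}, this places strong restrictions on the types of $K_4^-$'s in $H$: configurations like $AAAB$, $AABB$ and $ABCC$ (for distinct parts $A,B,C$) must all be rare, since each such configuration directly yields a bridge of length~$1$ between some pair. Combining this with the codegree bound $\delta_2(H) \ge (1/2 + \gamma)n$ and the fact that $|X'|, |Y'|, |Z'| \in [n/4, n/2]$ should force an abundance of $K_4^-$'s of some mixed type, contradicting the assumption.

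The main obstacle is that Lemma~\ref{lma:XYbridge}(iii), which is the crux of the bridge count, uses the identity $|X| + |Y| = n$ crucially; in the three-part setting the pair under consideration is a strict subset of $V(H)$, so the third class absorbs part of the codegree neighbourhoods and the key inequality must be rewritten to track that contribution separately. The analogue of case~(c) of Lemma~\ref{lma:2parts}, where most $K_4^-$'s concentrate inside single classes, should then be handled by invoking Corollary~\ref{cor:supersaturation} within each dense class and chaining the resulting $K_4^-$'s with $K_4^-$'s of mixed type across the tripartition, as in cases~(a) and~(b) of Lemma~\ref{lma:2parts}. The bulk of the work is the bookkeeping: balancing the constants $c_i, \varepsilon_i$ of Lemma~\ref{lma:XYbridge} across three pairs simultaneously, and extracting a three-part analogue of the terminal estimate $\varepsilon'_5 n^2 \ge n^2/10$ appearing at the end of Lemma~\ref{lma:2parts}.
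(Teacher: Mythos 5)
Your overall architecture matches the paper's: produce many bridges between two of the three classes, merge them with Lemma~\ref{lma:bridge}, and finish with Lemma~\ref{lma:2parts} applied to the resulting bipartition. But the crux of the argument --- actually producing those bridges when none is available for free --- is exactly the step you flag as ``the main obstacle'' and then leave unresolved, and your proposed route to it (running conditions (i)--(iv) of Lemma~\ref{lma:XYbridge} for all three pairs $(X',Y')$, $(X',Z')$, $(Y',Z')$ simultaneously) is not a viable fix. As you yourself observe, Lemma~\ref{lma:XYbridge} uses $|X|+|Y|=n$ essentially (its proof repeatedly converts the codegree bound into statements about $|N(\cdot,\cdot)\cap X|+|N(\cdot,\cdot)\cap Y|$), so it does not apply to a pair of classes whose union is a proper subset of $V(H)$; saying the key inequality ``must be rewritten to track that contribution separately'' names the problem without solving it, and it is precisely here that the proof has to do real work.

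The paper sidesteps the issue rather than rewriting the inequality. It applies Lemma~\ref{lma:XYbridge} only to the genuine bipartition $X=X'$, $Y=Y'\cup Z'$ (which does satisfy $n/4\le|X|\le n/2\le|Y|$), after first disposing of one case by hand: if there are at least $\varepsilon_0 n^4$ copies of $K_4^-$ of each of two complementary three-class types (say $X'X'Y'Z'$ and $X'Y'Y'Z'$), one builds $(X',Y')$-bridges directly by chaining connectors inside each class between two disjoint such copies. In the remaining case, the failure of condition (iii) for $(X,Y)$ forces $K_4^-$'s of type $X'X'Y'Z'$ to be rare as well, so almost no $K_4^-$ contains an edge of type $X'Y'Z'$, and Proposition~\ref{prp:edgeextension} then gives $e(X'Y'Z')\le 24\varepsilon_0 n^3$. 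This bound lets one lower-bound $e(X'X'Y')$ and $e(X'Y'Y')$ for the two \emph{smallest} classes directly from the codegree condition --- no supersaturation inside the classes is needed, contrary to your plan of invoking Corollary~\ref{cor:supersaturation} in each dense class --- and a double count of edges over $4$-sets with two vertices in each of $X'$ and $Y'$, parametrised by $\alpha=(|X'|+|Y'|)/n\in[1/2,2/3]$, yields the final contradiction. These three moves (the merged bipartition, the separate treatment of three-class-spanning $K_4^-$'s, and the $e(X'Y'Z')$ bound feeding a two-class double count) constitute the substance of the proof and are absent from your proposal.
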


\begin{proof}
Write $\delta = \delta_2(H)$.
Let $m_{X'} = 4i_{X'} -1$, $m_{Y'} = 4i_{Y'} -1$ and $m_{Z'} = 4i_{Z'} -1$.
Let $c_1,c_2,c_3,c_4,\varepsilon_1, \varepsilon_2, \varepsilon'_2, \varepsilon_3,\varepsilon'_3, \varepsilon_4, >0$ be constants as defined in the proof of Lemma~\ref{lma:2parts} with an extra constant $\varepsilon_0 > 0$.
Further assume that
\begin{align*}
	\gamma \ge \max \{ 768  \varepsilon_0, 2^{12} (c_3 +\varepsilon_3) \}.
\end{align*}
Again, $\eta_1$, $\eta_2$, \dots is assumed to be a decreasing sequence of strictly positive sufficiently small constants.

A triple $(u,v,S)$ is an \emph{$i$-bridge} if it is either an $(X',Y')$-bridge, an $(X',Z')$-bridge or a $(Y',Z')$-bridge of length~$i$.
If the number of $i$-bridges is at least $\varepsilon n^{4i+1}$ for some constants $\varepsilon>0$, then we may assume without loss of generality that the number of $(X',Y')$-bridges is at least $\varepsilon n^{4i+1}/3$ .
Hence, $X' \cup Y'$ is $(i_{X'}+i_{Y'}+i)$-closed in $H$ by Lemma~\ref{lma:bridge} and so $H$ is $i_0$-closed by Lemma~\ref{lma:2parts} for some~$i_0$.
Therefore, to prove the lemma it is enough to show that there exist an integer $i_0$ and a constant $\varepsilon>0$ such that the number of $i_0$-bridges is at leasts $\varepsilon n^{4i_0+1}$.

First, suppose that there are at least $\varepsilon_0 n^4$ copies of $K_4^-$ of each of type $X'X'Y'Z'$ and~$X'Y'Y'Z'$.
Hence, we can pick two vertex-disjoint copies of $K_4^-$, $T =\{x_1,x_2,y,z\}$ of type $X'X'Y'Z'$ and $T' = \{x',y'_1,y'_2,z'\}$ of type $X'Y'Y'Z'$.
Since $x_1$ is $(i_{X'}, \eta_{X'})$-close to $x'$, there exist at least $\eta_{X'} n^{m_{X'}}/2$ copies of $(x_1,x')$-bridges $S_{X'}$ with $S_{X'} \cap (V(T) \cup V(T')) = \emptyset$.
Fix one such $S_{X'}$.
Similarly, we can find a $(y,y'_1)$-bridge $S_{Y'}$ and a $(z,z')$-bridge $S_{Z'}$ such that $S_{Y'} \cap S_{Y'} = \emptyset$ and $(S_{Y'} \cup S_{Z'}) \cap (S_{X'} \cup V(T) \cup V(T')) = \emptyset$.
Furthermore, there are at least $\eta_{Y'} n^{m_{Y'}}/2$ and $\eta_{Z'} n^{m_{Z'}}/2$ choices for $S_{Y'}$ and $S_{Z'}$ respectively.
Set $S = S_{X'} \cup S_{Y'} \cup S_{Z'} \cup \{x_1,x',y,y'_1,z,z'\}$.
Note that $(x_4,y',S)$ is an $(X',Y')$-bridge of length $i_0 = i_X+i_Y+i_Z+1$. 
Moreover, there are $\varepsilon_0^2 \eta_{X'} \eta_{Y'} \eta_{Z'} n^{m_0}/(32(m_0!))$ such $(X',Y')$-bridges, where $m_0 = 4i_0+1$.
Hence, we may assume without loss of generality that there are less than $\varepsilon_0 n^4$ copies of $K_4^-$ of each of type $X'Y'Y'Z'$ and~$X'Y'Z'Z'$.

We now mimic the proof of Lemma~\ref{lma:2parts} by setting $X=X'$ and $Y=Y' \cup Z'$. 
Note that $|X| +|Y| = n$ and $|Y| = |Y'|+|Z'| \ge n/2 \ge |X| \ge n/4$.
Observe that an $(X,Y)$-bridge of length~$i$ is an $i$-bridge.
Hence, the lemma is proved if we can show that there are many $(X,Y)$-bridges of length~$i$.
Hence, we may further assume that none of conditions (i)--(iv) in Lemma~\ref{lma:XYbridge} holds, otherwise we are done.

Since condition~(iii) does not hold, there are less than $(c_3 +\varepsilon_3)n^4$ copies of $K_4^-$ of type $XXYY$.
Therefore, there are less than $(c_3 +\varepsilon_3)n^4 < \varepsilon_0 n^4$ copies of $K_4^-$ of type $X'X'Y'Z'$.
Recall that there are less than $\varepsilon_0 n^4$ copies of $K_4^-$ of each of type $X'Y'Y'Z'$ and~$X'Y'Z'Z'$.
Thus, there are less than $3 \varepsilon_0 n^4$ copies of $K_4^-$ that contain an edge of type~$X'Y'Z'$.
Since $|L(e)| \ge (1/4 + \gamma) n$ for every edge $e$ by Proposition~\ref{prp:edgeextension}, 
\begin{align}
e(X'Y'Z') \le 24 \varepsilon_0 n^3. \nonumber
\end{align}
Without loss of generality, we may further assume that $|X'| \le |Y'| \le |Z'|$.
Let $|X'|+|Y'| = \alpha n$, so $1/2 \le \alpha \le 2/3$.
Since $(|X'|+ |Y'|)+(|X'|+|Z'|) \ge 2\alpha n$ and $|X'|+|Y'| +|Z'|= n $, we have
\begin{align}
|X'| \ge (2\alpha -1) n. \label{eqn:|X|}
\end{align}
Recall that $\gamma \ge 768 \varepsilon_0$ and $\delta \ge (1/2 + \gamma) n$.
Hence, 
\begin{align*}
e(X'Y'Y')  & =  \frac12 \left ( \sum_{x \in X', y \in Y'} (\deg(x,y) - |X'|+1) - e(X'Y'Z') \right)\\
& \ge  |X'||Y'| (\delta - |X'|+1)/2 - 12 \varepsilon_0 n^3 \\
& \ge |X'||Y'| ( (1+\gamma)n - 2|X'|)/4
\end{align*}
Similarly, we have 
\begin{align}
e(X'X'Y') & \ge  |X'||Y'| ( (1+\gamma) n - 2|Y'|)/4. \nonumber
\end{align}
For $x,x' \in X'$ and $y,y' \in Y'$, define $a(x,x',y,y')$ to be the number of edges in $H[\{x,x',y,y'\}]$ as before.
Therefore,
\begin{align}
	& \sum  a(x,x',y,y') \nonumber \\
 & =  (|Y'|-1) e(X'X'Y') + (|X'|-1) e(X'Y'Y') \nonumber \\
	& \ge 
	\frac{|X'||Y'| }4 \left[ (|Y'|-1) ( (1+\gamma)n - 2|Y'| ) +  ( |X'|-1 )  ( (1+\gamma)n - 2|X'|)      \right] \nonumber\\
	& \ge 
\frac{|X'||Y'| }4 \left[ (1+\gamma/2 )n ( |X'|+|Y'| )  -2 (|X'|^2  +|Y'|^2) \right]\nonumber\\
	& = \frac{|X'||Y'| }4 \left[  4 |X'| |Y'| - ( |X'|+|Y'| )( 2|X'|+2|Y'| - (1+\gamma/2 )n ) \right] \nonumber \\
	& = \frac{|X'||Y'| }4 \left[ 4 |X'| |Y'| - \alpha ( 2\alpha - 1 -\gamma/2 )n^2 \right],
 \label{eqn:3partlast1}
\end{align}	
where we recall that $|X'|+|Y'| = \alpha n$.
Note that if $a(x,x',y,y') \ge 3$, then $H[\{x,x',y,y'\}]$ contains a~$K_4^-$.
Since there are less than $(c_3 +\varepsilon_3)n^4$ copies of $K_4^-$ of type $XXYY$, 
\begin{align*}
\sum a(x,x',y,y') \le (2 + 2^{12} (c_3 +\varepsilon_3) ) \binom{|X'|}{2} \binom{|Y'|}{2} \le (1+ \gamma) |X'|^2 |Y'|^2 /2
\end{align*}
 as $|X'|, |Y'| \ge n/4$ and $\gamma \ge  2^{12} (c_3 +\varepsilon_3)$.
Together with~\eqref{eqn:3partlast1}, we have
\begin{align}
		2(1-  \gamma ) |X'||Y'|	 & \le   \alpha ( 2\alpha - 1 -\gamma/2 )n^2 \label{eqn:3partlast}
\end{align} 
Recall~\eqref{eqn:|X|} that $|X'| \ge (2\alpha -1) n$ and $|X'|+|Y'|= \alpha n$.
Therefore, by taking $|Y' | = \alpha n - |X'|$ and $|X'| = (2\alpha -1) n$, \eqref{eqn:3partlast} becomes
\begin{align*}
		 2(1-  \gamma ) (2\alpha-1) (1-\alpha) n^2 	 & \le  \alpha ( 2\alpha - 1 -\gamma/2 )n^2,  \\
		2(1-\alpha)  &  < 2 \alpha,
\end{align*} 
 a contradiction, where  $( 2\alpha - 1 -\gamma/2 ) < (1- \gamma) (2\alpha-1)$ and $1/2 \le \alpha \le 2/3$.
The proof of Lemma~\ref{lma:3parts} is complete.
\end{proof}

Therefore, Lemma~\ref{lma:(i,eta)-close} follows immediately from Lemma~\ref{lma:connectioncomponent}, Lemma~\ref{lma:2parts} and Lemma~\ref{lma:3parts}.

\section{Closing remarks}

We would like to know the exact value of $t_2^3(n,K_4^-)$.
If Conjecture~\ref{conjecture} is true, then by Remark~\ref{rmk:lowerimproved} we know that there is no unique extremal graph for $n = 1 \pmod{3}$.
However, each of the given constructions contains $H_{n/2-1,n/2}$ as an induced subgraph.

Another natural question is to ask for the $\delta_2(H)$-threshold for the existence of $K_4^-$.
Take a random tournament on $n$ vertices, let $H$ be a 3-graph on the same vertex set such that every edge in $H$ is a directed triangle.
Note that $H$ is $K_4^-$-free and $\delta_2(H) = (1/4+o(1))n$. 
\begin{qns}
For $\varepsilon >0$, do all $3$-graphs of sufficiently large order $n$ with $\delta_2(H) \ge (1/4+\varepsilon)n$ contain a $K_4^-$?
\end{qns}
Note that a 3-graph $H$ of order $n$ with $\delta_2(H) \ge \gamma n$ contains at least $\gamma \binom{n}3$ edges. 
Thus, one of the results of Baber and Talbot~\cite{MR2769186} implies that the answer to the question above would be affirmative for $\delta_2(H) \ge (0.2871 +o(1) )n$.

\section{Acknowledgment}
The authors would like to thank the anonymous referees for the helpful comments and pointing out errors in the earlier version of the manuscript.

\end{document}